\newtheorem{thm}{Theorem}[section]
\newtheorem{defn}{Definition}[section]
\newtheorem{prop}{Proposition}[section]
\newtheorem{lem}{Lemma}[section]
\newtheorem{rem}{Remark}[section]
\newtheorem{exmpl}{Example}[section]
\journal{... }
\begin{document}
\begin{frontmatter}

\title{ Maurer-Cartan characterization,   cohomology and deformations   of equivariant    Lie superalgebras}


%
\author{RB Yadav \fnref{myfootnote}\corref{mycorrespondingauthor}}
\cortext[mycorrespondingauthor]{Corresponding author}
\ead{rbyadav15@gmail.com}
\author{Subir Mukhopadhyay\corref{mycoauthor}}
\ead{smukhopadhyay@cus.ac.in}
\address{Sikkim University, Gangtok, Sikkim, 737102, \textsc{India}}

\begin{abstract}
In this article,  we give Maurer-Cartan characterizations of equivariant Lie superalgebra structures. We introduce equivariant  cohomology  and  equivariant formal deformation theory of  Lie superalgebras.  As an application of  equivariant cohomology we study the equivariant formal deformation theory of Lie superalgebras. As another  application  we characterize equivariant central extensions of Lie superalgebras using second equivariant cohomology. We give some examples of Lie superalgebras with an action of a group and  equivariant formal deformations of a classical Lie superalgebras. 

\end{abstract}

\begin{keyword}
\texttt{Lie superalgebra, cohomology, extension,  formal deformations, Maurer-Cartan equation}
\MSC[2020] 17A70 \sep 17B99 \sep 16S80 \sep 13D10 \sep 13D03  \sep 16E40
\end{keyword}
\end{frontmatter}
\section{Introduction}\label{rbsec1}
Graded Lie algebras have been a topic of interest in physics in the context of "supersymmetries" relating particles of differing statistics. In mathematics, graded Lie algebras have been studied in the context of deformation theory, \cite{MR0438925}.

 Lie superalgebras were studied and a classification was given by Kac   \cite{MR486011}. 
   Leits \cite{MR0422372} introduced a cohomology for Lie superalgebras.  Lie superalgebras are also called $ \mathbb{Z}_2$-graded Lie algebras by physicists.

 Algebraic deformation theory was introduced by Gerstenhaber for rings and algebras \cite{MR171807},\cite{MR0207793},\cite{MR240167}, \cite{MR389978}, \cite{MR704600}. Deformation theory of Lie superalgebras was introduced and studied by Binegar \cite{MR871615}.  Maurer-Cartan characterization  was given for  Lie algebra structures by Nijenhuis and Richardson in  \cite{MR0214636} and for associative algebra structures  by Gerstenhaber in  \cite{MR161898}. Such characterization  for Lie superalgebra structures was given   in \cite{MR1028197}. Deformation theory of Lie superalgebras was studied in \cite{MR871615}.

Aim  of the present  paper is to give Maurer-Cartan characterization, introduce   equivariant  cohomology, do some  equivariant cohomology computations in lower dimensions,  introduce  equivariant formal deformation theory of Lie superalgebras and give some examples.  Organization of the paper is as follows. In Section \ref{rbsec2}, we recall definition of Lie superalgebra and give some  examples. In Section \ref{MCC}, we give Maurer-Cartan characterization of equivariant Lie superalgebras. In this section we construct a $\mathbb{Z}\times\mathbb{Z}_2$-graded Lie algebra from a $\mathbb{Z}_2$-graded $G$-vector space. We show that class of  Maurer-Cartan elements of this $\mathbb{Z}\times\mathbb{Z}_2$-graded Lie algebra is the class of $G$-equivariant Lie superalgebra structures on $V.$  In Section \ref{rbsec3}, we introduce   equivariant chain complex and   equivariant cohomology of Lie superalgebras. In Section \ref{rbsec4}, we compute cohomology of Lie superalgebras in degree $0$ and dimension $0$, $1$ and $2$.  In Section \ref{rbsec5}, we introduce  equivariant deformation theory of  Lie superalgebras. In this section  we see  that infinitesimals of  equivariant deformations are   equivariant cocycles . Also, in this section we give an example of an equivariant formal deformation of a Lie superalgebras. In Section \ref{rbsec6}, we study equivalence of two  equivariant formal deformations and prove that infinitesimals of any two equivalent  equivariant deformations are cohomologous.
\section{Lie Superalgebras}\label{rbsec2}
In this section, we recall definitions of Lie superalgebras and  modules over a Lie superalgebras. We recall some examples of Lie superalgebras.  Throughout the paper we denote a fixed field  by $K$.  Also, we denote the ring of formal power series with coefficients in $K$ by $K[[t]]$.
In any $\mathbb{Z}_2$-graded vector space $V$  we use a notation in which we replace degree $deg(a)$ of an element $a\in V$ by $`a'$ whenever $deg(a)$ appears in an exponent; thus, for example $(-1)^{ab}=(-1)^{deg(a)deg(b)}$.
\begin{defn}
  Let $V=V_0\oplus V_1$ and $W=W_0\oplus W_1$ be $\mathbb{Z}_2$-graded vector spaces over a field $K$. A linear map $f:V \to W$ is said to  be homogeneous of degree $\alpha$ if $f(V_\beta)\subset W_{\alpha+\beta}$, for all  $\beta\in \mathbb{Z}_2= \{0,1\}$. We write $(-1)^{\deg(f)}=(-1)^f$. Elements of $ V_\beta$ are called homogeneous of degree $\beta.$
\end{defn}
\begin{defn} A superalgebra is a $\mathbb{Z}_2$-graded vector space $A=A_0\oplus A_1$ together with a bilinear map $m:A\times A\to A$ such that $m(a,b)\in A_{\alpha+\beta},$ for all  $a\in A_{\alpha}$, $b\in A_{\beta}$. 
\end{defn} 
\begin{defn}
A   Lie superalgebra is a superalgebra $L=L_0\oplus L_1$ over a field $K$ equipped with an operation $[-,-]:L\times L\to L$ satisfying the following conditions:
\begin{enumerate}
  \item $[a,b]=-(-1)^{\alpha\beta}[b,a]$, 
  \item $[[a,[b,c]]=[[a,b],c]+(-1)^{\alpha\beta}[b,[a,c]]$, \hspace{3cm}(Jacobi identity)
\end{enumerate}
for all $a\in L_{\alpha}$and $b\in L_{\beta}$.
  Let $L_1$ and $L_2$ be two Lie superalgebras. A homomorphism $f:L_1\to L_2$ is  a $K$-linear map such that $f([a,b])=[f(a),f(b)].$  Given a Lie superalgebra $L$  $[L,L]$ is the vector subspace of $L$ spanned  by the set $\{[x,y]: x,y\in L\}$. A Lie superalgebra $L$ is called abelian if $[L,L]=0.$
\end{defn}
\begin{exmpl}\label{rbLSe1}
 Let $V=V_{\bar{0}}\oplus V_{\bar{1}}$ be a $\mathbb{Z}_2$-graded vector  space, $dim V_{\bar{0}} = m$, $dim V_{\bar{1}} = n$. Consider the  associative algebra $End V$ of all endomorphisms of $V$. 
Define
\begin{equation}
\tag{1}
 End_i \; V = \{a \in End \;V\;|\; a V_s\subseteq V_{i+s}\} \; ,\; i,s\in \mathbb{Z}_2
 \end{equation}
 One can easily verify that  $End\; V=End_{\bar{0} }\; V\oplus End_{\bar{1}} \; V  $.
The bracket $[a,b] = ab - (-1)^{\bar{a} \bar{b}}ba$ makes $End V$ into a Lie superalgebra, denoted by $\ell(V)$ or $\ell(m,n)$. In some (homogeneous) basis of $V$, $\ell(m,n)$ consists of  block matrices of the form $\big(\begin{smallmatrix}\alpha & \beta \\ \gamma & \delta \end{smallmatrix}\big)$, where $\alpha, \beta, \gamma,  \delta$ are matrices of order $m\times m$, $m\times n$, $n\times m$ and $n\times n,$ respectively.
\end{exmpl}
\begin{exmpl}
 Define  a linear function $str : \ell(V)\rightarrow k$,  by 
$str([a,b]) = 0,\; a,b \in \ell(V)$, and $str\; id_V = m - n$.
$str(a)$ is called a supertrace of  $a\in \ell(V) $.
 Consider the subspace
$$s\ell(m,n) = \{a\in \ell(m,n)\;|\; str\; a = 0\}.$$
Clearly,  $s\ell(m,n)$ is an ideal of $\ell (m,n)$ of codimension 1. Therefore   $s\ell(m,n)$ is a subalgebra of $\ell (m,n)$.

For any $\big(\begin{smallmatrix}\alpha & \beta \\ \gamma & \delta \end{smallmatrix}\big)$  in  $\ell(m,n)$  $str \big(\begin{smallmatrix}\alpha & \beta \\ \gamma & \delta \end{smallmatrix}\big) = tr\;\alpha - tr\;\delta$. 
$s\ell(n,n)$ contains the one-dimensional ideal  $\{\lambda I_{2n}: \lambda\in K\}$.
\end{exmpl}

\begin{defn}\cite{MR2286721}
  Let $L=L_0\oplus L_1$ be a Lie superalgebra. A $\mathbb{Z}_2$-graded vector space $M=M_0\oplus M_1$ over the field $K$ is called a module over $L$ if there exists a  bilinear map $[-,-]:L\times M\to M$ such that following condition is  satisfied
 $$[a,[b,m]]=[[a,b],m]+(-1)^{ab}[b,[a,m]].$$
  
for all $a\in L_\alpha$, $b\in L_\beta,$  $\alpha,\beta\in\{0, 1\}$.
\end{defn}
Clearly, every Lie superalgebra is a module over itself. 

\section{$\mathbb{Z}_2$-graded Groups and their Actions on a Lie Superalgebra}
\begin{defn}
We define a $\mathbb{Z}_2$-graded group as a group $G$ having a subgroup $G_{\bar{0}}$ and a subset $G_{\bar{1}}$ such that
 for all $x\in G_i, y\in G_j$, $xy\in G_{i+j},$ where $i, j, i+j\in \mathbb{Z}_2$.
\end{defn}

\begin{exmpl}
Consider $\mathbb{Z}_6=\{\bar{0}, \bar{1}, \bar{2}, \bar{3}, \bar{4}, \bar{5}\}$. Take $G=\mathbb{Z}_6$, $G_{\bar{0}}=\{\bar{0}, \bar{2}, \bar{4}\}$, $G_{\bar{1}}=\{\bar{1}, \bar{3}, \bar{5}\}$. Clearly,  with this choice of $G_{\bar{0}}$  and $G_{\bar{1}}$, $G$ is a $\mathbb{Z}_2$-graded group.
\end{exmpl}
\begin{exmpl}
Every group $G$ can be seen as  $\mathbb{Z}_2$-graded group with $G_{\bar{0}}=G$  and $G_{\bar{1}}=\emptyset.$
\end{exmpl}

\begin{defn}
A $\mathbb{Z}_2$-graded group $G$ is said to act on a Lie superalgebra $L=L_0\oplus L_1$ if there exits a map $$\psi:G\times L\to L,\;\; (g,x)\mapsto \psi(g,x)=gx$$ satisfying following conditions
\begin{enumerate}
\item $ex=x,$ for all $x\in L.$ Here $e\in G$ is the identity element of $G.$
\item $\forall g\in G_i$, $i\in \mathbb{Z}_2$ $\psi_g:L\to L$ given by $\psi_g(x)=\psi(g,x)=gx$ is a homogeneous linear map of degree $i$.
\item $\forall g_1, g_2\in G$, $\psi(g_1g_2, x)=\psi(g_1, \psi(g_2, x))$, that is $(g_1g_2)x=g_1(g_2x)$.
\item For $x, y\in L$, $g\in G$, $[gx,gy]=g[x,y].$
\end{enumerate}
We denote an action as above by $(G, L)$.
\end{defn}
\begin{prop}
  Let $G$ be a finite $\mathbb{Z}_2$-graded group and $L$ be a Lie superalgebra. Then $G$ acts on $L$ if and only if there exists a group homomorphism of degree $0$
  $$ \phi: G \to Iso(L,L),\;\; g\mapsto  \phi(g) = \psi_g$$
from the group $G$ to the group of homogeneous Lie superalgebra isomorphisms from $L$ to $L$.
\end{prop}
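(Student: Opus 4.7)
The plan is to prove the equivalence by direct translation: the data of an action and the data of a degree-$0$ homomorphism $\phi : G \to \mathrm{Iso}(L,L)$ correspond bijectively via $\psi(g,x) = \phi(g)(x) = \psi_g(x)$. One direction turns the axioms (1)--(4) of an action into the homomorphism/morphism properties of $\phi$; the other reverses the dictionary. Finiteness of $G$ is not actually needed and the proof goes through for any $\mathbb{Z}_2$-graded group, so I would proceed uniformly.

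For the forward direction, suppose $(G,L)$ is given and set $\phi(g) := \psi_g$. Condition (2) says $\psi_g$ is $K$-linear and, for $g \in G_i$, homogeneous of degree $i$; condition (4) says $\psi_g$ preserves the bracket, so $\psi_g$ is a homomorphism of Lie superalgebras. To see $\psi_g$ is invertible, I would use (1) and (3): $\psi_g \circ \psi_{g^{-1}} = \psi_{gg^{-1}} = \psi_e = \mathrm{id}_L$ and likewise on the other side, whence $\psi_g \in \mathrm{Iso}(L,L)$. Condition (3) then reads $\phi(g_1 g_2) = \phi(g_1)\phi(g_2)$, so $\phi$ is a group homomorphism; and since $\phi(G_i) \subseteq \mathrm{Iso}_i(L,L)$, the homomorphism $\phi$ is of degree $0$ in the sense intended.

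For the converse, given $\phi : G \to \mathrm{Iso}(L,L)$ of degree $0$, define $\psi(g,x) := \phi(g)(x)$. Then (1) is immediate because any group homomorphism sends $e$ to the identity, so $\phi(e) = \mathrm{id}_L$. Axiom (2) is exactly the degree-$0$ condition together with the fact that elements of $\mathrm{Iso}(L,L)$ are $K$-linear. Axiom (3) is the multiplicativity of $\phi$, and axiom (4) is the requirement that each $\phi(g)$ be a Lie superalgebra morphism. The two constructions are mutual inverses by inspection.

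The only genuinely delicate point is the bookkeeping around the $\mathbb{Z}_2$-grading: one must verify that ``degree $0$ group homomorphism $\phi : G \to \mathrm{Iso}(L,L)$'' is precisely the right gadget to encode the homogeneity clause (2). I would therefore make explicit, before starting, that $\mathrm{Iso}(L,L) = \mathrm{Iso}_{\bar 0}(L,L) \sqcup \mathrm{Iso}_{\bar 1}(L,L)$ is itself a $\mathbb{Z}_2$-graded group under composition (closure under composition follows from $\deg(\phi \circ \phi') = \deg(\phi) + \deg(\phi')$, and inverses preserve degree), so that a degree-$0$ homomorphism means exactly $\phi(G_i) \subseteq \mathrm{Iso}_i(L,L)$. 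Once this is in place, the verification is routine and essentially tautological.
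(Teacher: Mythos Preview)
Your proposal is correct and follows essentially the same approach as the paper: define $\phi(g)=\psi_g$ in one direction and $\psi(g,x)=\phi(g)(x)$ in the other, then verify that the action axioms and the homomorphism/isomorphism properties translate into one another. The paper's proof is terser (it simply asserts that the verifications are easy), whereas you spell out the invertibility of $\psi_g$, the degree bookkeeping, and the $\mathbb{Z}_2$-graded group structure on $\mathrm{Iso}(L,L)$; but the underlying argument is identical.
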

\begin{proof}
  For an action $(G,L)$, we define a map $\phi:G\to Iso(L,L)$ by $\phi(g)=\psi_g.$ One can verify easily that $\phi$ is a  group homomorphism.
  Now, let $\phi:G\to Iso(L,L)$ be a group homomorphism. Define a map $G\times L\to L$ by $(g,a)\mapsto \phi(g)(a).$ It can be easily seen that this is an action of $G$ on  $L$.
\end{proof}
\noindent\textbf{Note:} In this article we consider action of groups $G$, that is  those  $\mathbb{Z}_2$-graded groups $G$ for which    groups  $G_0=G$ and   $G_1=\emptyset.$
We call a  Lie Superalgebra  $L=L_0\oplus L_1$ with an action of a group $G$    $G$-Lie Superalgebra.
\begin{exmpl}\label{SPLS}
{\bf Super-Poincare algebra}: The ($ {\mathcal N} = 1$) Super-Poincare algebra $L=L_0\oplus L_1$ is given by\footnote{Here we have used the following notation.  $\mu, \nu, \rho,... = 0,1,2,3.$; $\sigma^i$, $i=1,2,3$ represent Pauli spin matrices and one introduces
$\sigma^\mu = ( \bf{1}, \sigma^i)\quad\text{and}\quad \bar{\sigma}^\mu = ( \bf{1},- \sigma^i) ,\\
\left(\sigma^{\mu\nu}\right)_\alpha^{~\beta} = - \frac{i}{4} (\sigma^\mu\bar{\sigma}^\nu - \sigma^\nu\bar{\sigma}^\mu)_\alpha^{~\beta} ,
\left(\bar{\sigma}^{\mu\nu}\right)_\alpha^{~\beta} = - \frac{i}{4} (\bar{\sigma}^\mu\sigma^\nu - \bar{\sigma}^\nu\sigma^\mu)^{\dot\alpha}_{~\dot\beta}$.
		Spinor indices are denoted by $\alpha, \beta,\dot\alpha, \dot\beta $, they  take values from the set $ \{ 1,2\}$ and  are being raised and lowered by $\epsilon^{\alpha\beta}$ ($\epsilon^{\dot\alpha\dot\beta}$), and $\epsilon_{\alpha\beta}$ ($\epsilon_{\dot\alpha\dot\beta}$). They are antisymmetric and we have chosen
		$\epsilon^{12} = \epsilon^{\dot{1}\dot{2}} = +1$}
		\begin{equation}
		\begin{split}\nonumber
		i[J^{\mu\nu}, J^{\rho\sigma}] &= \eta^{\nu\rho} J^{\mu\sigma} - \eta^{\mu\rho} J^{\nu\sigma} -   \eta^{\sigma\mu} J^{\rho\nu}+  \eta^{\sigma\nu} J^{\rho\mu}, \\
		i[P^\mu, J^{\rho\sigma}] &= \eta^{\mu\rho} P^\sigma - \eta^{\mu\sigma} P^{\rho} ,\quad
		[P^\mu, P^\rho] = 0,\\
		[Q_\alpha, J^{\mu\nu}] &= \left(\sigma^{\mu\nu}\right)_\alpha^{~~\beta} Q_\beta ,\quad [\bar{Q}^{\dot\alpha}, J^{\mu\nu}] = \left(\bar{\sigma}^{\mu\nu}\right)^{\dot\alpha}_{~~\dot\beta} \bar{Q}^{\dot\beta} \\
		[Q_\alpha, P^\mu] &=0, \quad [\bar{Q}^{\dot\alpha}, P^\mu] = 0 \\
		\{Q_\alpha, Q_\beta \} &= 0 ,\quad \{\bar{Q}^{\dot\alpha}, \bar{Q}^{\dot\beta} \} = 0 , \\		
		\{ Q_\alpha, \bar{Q}^{\dot\beta} \} &= 2 (\sigma^\mu)_{\alpha\dot\beta} P_\mu.
		\end{split}
		\end{equation}
  Here $L_0$ is generated by the set $\{J^{\mu\nu}:\mu,\nu=0,1,2,3\}\cup \{P^\mu: \mu=0,1,2,3\}$ over $\mathbb{C}.$  $L_1$ is generated by the set $\{Q_\alpha: \alpha=1,2\}\cup \{\bar{Q}^{\dot\alpha}:\dot\alpha=1,2\}$ over $\mathbb{C}.$  
  Consider the group $\mathbb{Z}_m=\{g^n: \;n=0,1,\ldots,m-1\}$, where $g=e^{\frac{2\pi i}{m}}$.
	There exists an action of 	$\mathbb{Z}_m$  on the Super-Poincare algebra $L=L_0\oplus L_1$ given by 
		 \begin{equation}\nonumber
	(g^n, J^{\mu\nu}) \mapsto J^{\mu\nu}, \quad(g^n, P^\mu) \mapsto P^\mu, \quad 	(g^n,Q_\alpha) \mapsto g^n Q_\alpha, \quad (g^n,\bar{Q}^{\dot{\alpha}})\mapsto  g^{m-n} \bar{Q}^{\dot{\alpha}},
		\end{equation}
  for every $n=0,1,\ldots,m-1.$
\end{exmpl}
\begin{exmpl}\label{ELS100}
   Let $e_{ij}$ denote a $2\times 2$ matrix with $(i,j)$th entry $1$ and all other entries $0.$ Consider $L_0=span\{e_{11}, e_{22}\}$, $L_1=span\{e_{12}, e_{21}\}$. Then $L=L_0\oplus L_1$ is a Lie superalgebra with the bracket $[\;,\;]$ defined by $$[a,b]=ab-(-1)^{\bar{a}\bar{b}}ba.$$ Define a function $\psi:\mathbb{Z}_2\times L\to L$ by    
  $ \psi(0,x)=x, \forall x\in L$,  $\psi(1, e_{11})=e_{22}$, $\psi(1, e_{22})=e_{11}$, $\psi(1, e_{12})=e_{21}$, $\psi(1, e_{21})=e_{12}$.
Obviously Conditions $1-3$   hold for $(\mathbb{Z}_2,L)$ to be  an action. To verify condition $4$ it is enough to verify for basis elements of $L_0$ and $L_1$. We have 
\begin{enumerate}
\item $1[e_{ii}, e_{ii}]=0=[1e_{ii}, 1e_{ii}],$ $\forall$  $i=1,2$.
\item $1[e_{ii}, e_{jj}]=0=[e_{jj}, e_{ii}]=[1e_{ii}, 1e_{jj}],$ $\forall$ $i,j=1,2, \; i\ne j$.
\item $1[e_{ij}, e_{ji}]=1(e_{ii}-(-1)^1e_{jj})=e_{jj}+e_{ii}=[1e_{ij}, 1e_{ji}],$  $\forall$ $i,j=1,2, \; i\ne j$.
\item $1[e_{ij}, e_{ij}]=0=[e_{ji}, 1e_{ji}]=[1e_{ij}, 1e_{ij}],$  $\forall$ $i,j=1,2, \; i\ne j$.
\item $1[e_{ii}, e_{ij}]=1(e_{ij})=e_{ji}=[e_{jj}, e_{ji}]=[1e_{ii}, 1e_{ij}],$ $\forall$ $i,j=1,2, \; i\ne j$.
\item $1[e_{jj}, e_{ij}]=1(-e_{ij})=-e_{ji}=[e_{ii}, e_{ji}]=[1e_{jj}, 1e_{ij}],$ $\forall$ $i,j=1,2, \; i\ne j$.
\end{enumerate} 
From above it is clear that  $(\mathbb{Z}_2,L)$ is   an action.
\end{exmpl}
\begin{defn}
Let $L=L_0\oplus L_1$ be a Lie superalgebra. Let $G$ be a finite group which acts on $L$. A $\mathbb{Z}_2$-graded vector space $M=M_0\oplus M_1$ with an action of $G$ is called a $G$-module over $L$ if there exists a  $G$-equivariant  bilinear map $[-,-]:L\times M\to M$ such that following condition is  satisfied
$$[a,[b,m]]=[[a,b],m]+(-1)^{ab}[b,[a,m]],$$
  
for all $a\in L_\alpha$, $b\in L_\beta,$  $\alpha,\beta,\in\{0, 1\}$.
\end{defn}
\begin{exmpl}
   Every $G$-Lie superalgebra is a $G$-module over itself. 
\end{exmpl}
\begin{exmpl}
  Let $L=L_0\oplus L_1$ be the ($ {\mathcal N} = 1$) Super-Poincare algebra, Example \ref{SPLS}.  Let $M_0$ be the span of  $\{P^\mu: \mu=0,1,2,3\}$ and  $M_1$ be the span of the set  $\{Q_\alpha: \alpha=1,2\}\cup \{\bar{Q}^{\dot\alpha}:\dot\alpha=1,2\}$. Then clearly $M=M_0\oplus M_1$ is a $\mathbb{Z}_m$-module over $L=L_0\oplus L_1$.
\end{exmpl}
\section{Maurer-Cartan Characterization of Equivariant Lie Superalgebra Structures}\label{MCC}
\begin{defn}
    A finite group $G$ is said to act on a  $\mathbb{Z}_2$-graded vector space  $V=V_0\oplus V_1$ if there exits a map $$\psi:G\times V\to V,\;\; (g,x)\mapsto \psi(g,x)=gx$$ satisfying following conditions
\begin{enumerate}
\item $ex=x,$ for all $x\in V$. Here $e$ is the identity element of $G.$
\item $\forall g\in G$,  $\psi_g: V\to V$ given by $\psi_g(x)=\psi(g,x)=gx$ is a homogeneous linear map of degree $0$.
\item $\forall g_1, g_2\in G$, $\psi(g_1g_2, x)=\psi(g_1, \psi(g_2, x))$, that is $(g_1g_2)x=g_1(g_2x)$.
\end{enumerate}
A  $\mathbb{Z}_2$-graded vector space  $V=V_0\oplus V_1$ with an action of a group $G$ is called a $G$-vector space.
\end{defn}
Let $V=V_0\oplus V_1$ and $W=W_0\oplus W_1$ be vector spaces over a field $\mathbb{F}$.   An $n$-linear map $f:V \underset{n\; times}{\underbrace{\times\cdots\times}}V\to W$ is said to  be homogeneous of degree $\alpha$ if $f(x_1,\cdots, x_n)$ is homogeneous in $W$ and  $\deg(f(x_1,\cdots, x_n))-\sum_{i=1}^{n}\deg(x_i)=\alpha$, for homogeneous  $x_i\in V$, $1\le i\le n.$ We denote the  degree of a homogeneous $f$ by $\deg(f)$. We write $(-1)^{\deg(f)}=(-1)^f$. 

 Consider the permutation group $S_n.$ For any $X=(X_1,\ldots, X_n)$ with $X_i\in V_{x_i}$ and $\sigma\in S_n$, define 
$$ K(\sigma, X)= card\{ (i,j): i<j,\; X_{\sigma(i)}\in V_1, X_{\sigma(j)}\in V_1,  \; \sigma(j) < \sigma(i)\}, $$
$$\epsilon(\sigma, X)=\epsilon(\sigma)(-1)^{K(\sigma,X)},$$
where $ card A $ denotes cardinality of a set $A, $ $\epsilon(\sigma)$ is the signature of $\sigma.$ Also,  define  $\sigma.X=(X_{\sigma^{-1}( 1)},\ldots, X_{\sigma^{-1}( n)}).$
We have following Lemma \cite{MR1028197} 
\begin{lem}\label{SFL1}
    \begin{enumerate}
        \item $K(\sigma \sigma',X)=K(\sigma,X)+K(\sigma',\sigma^{-1}X)\;\;\; (mod 2).$ 
        \item $\epsilon(\sigma \sigma', X)=\epsilon(\sigma, X)\epsilon(\sigma', 
\sigma^{-1} X)$.
    \end{enumerate}
\end{lem}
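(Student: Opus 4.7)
The plan is to reinterpret $K(\sigma, X)$ in a set-theoretic way and then reduce part (1) to multiplicativity of the signature on ordinary symmetric groups; part (2) will then follow formally.

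First, I would fix the subset of odd positions $I = \{k \in \{1, \dots, n\} : X_k \in V_1\}$. Since $X_{\sigma(i)} \in V_1$ iff $i \in \sigma^{-1}(I)$, the definition of $K(\sigma, X)$ counts precisely the inversions of $\sigma$ when restricted to the bijection $\sigma\big|_{\sigma^{-1}(I)} : \sigma^{-1}(I) \to I$, with both domain and codomain carrying the induced ordering from $\{1, \dots, n\}$. Using $(\sigma.X)_k = X_{\sigma^{-1}(k)}$, one sees that the odd set of $\sigma^{-1}.X$ is exactly $\sigma^{-1}(I)$, so $K(\sigma', \sigma^{-1}.X)$ similarly counts inversions of the bijection $\sigma'\big|_{(\sigma')^{-1}(\sigma^{-1}(I))} : (\sigma')^{-1}(\sigma^{-1}(I)) \to \sigma^{-1}(I)$. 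Likewise, $K(\sigma\sigma', X)$ counts inversions of $\sigma\sigma'\big|_{(\sigma\sigma')^{-1}(I)} : (\sigma\sigma')^{-1}(I) \to I$, and $(\sigma\sigma')^{-1}(I) = (\sigma')^{-1}(\sigma^{-1}(I))$.

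For part (1), I would then invoke the elementary fact that if $A, B, C$ are equinumerous finite subsets of $\mathbb{Z}$, each with its inherited order, and $\alpha : A \to B$, $\beta : B \to C$ are bijections, then the number of inversions satisfies
\[
\operatorname{inv}(\beta \circ \alpha) \equiv \operatorname{inv}(\alpha) + \operatorname{inv}(\beta) \pmod{2}.
\]
This is proved by identifying each of $A, B, C$ with $\{1, \dots, k\}$ through the unique order isomorphism, which turns $\alpha$ and $\beta$ into ordinary permutations $\tilde{\alpha}, \tilde{\beta}$ without changing their inversion counts, and then applying multiplicativity of the signature $\operatorname{sgn}(\tilde{\beta}\tilde{\alpha}) = \operatorname{sgn}(\tilde{\alpha})\operatorname{sgn}(\tilde{\beta})$. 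Applying this with $A = (\sigma\sigma')^{-1}(I)$, $B = \sigma^{-1}(I)$, $C = I$, $\alpha = \sigma'|_A$, $\beta = \sigma|_B$ yields part (1) directly.

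Part (2) then drops out by a one-line computation: combining $\epsilon(\sigma\sigma') = \epsilon(\sigma)\epsilon(\sigma')$ with part (1) gives
\[
\epsilon(\sigma\sigma', X) = \epsilon(\sigma\sigma')(-1)^{K(\sigma\sigma', X)} = \epsilon(\sigma)(-1)^{K(\sigma, X)} \cdot \epsilon(\sigma')(-1)^{K(\sigma', \sigma^{-1}X)} = \epsilon(\sigma, X)\,\epsilon(\sigma', \sigma^{-1}X).
\]
The main obstacle is purely bookkeeping: one has to keep straight that $\sigma$ acts on the index set while $\sigma^{-1}$ appears in the formula for the induced action on tuples, and verify that the odd-position sets $I$, $\sigma^{-1}(I)$, $(\sigma\sigma')^{-1}(I)$ fit together compatibly with the three restricted bijections above. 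Once this alignment is established, the proof reduces to the standard multiplicativity of the signature, and no computation with individual inversions is needed.
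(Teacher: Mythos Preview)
Your argument is correct. The paper does not actually supply a proof of this lemma; it simply attributes the result to the cited reference, so there is no in-paper argument to compare against. Your reinterpretation of $K(\sigma,X)$ as the inversion number of the restricted bijection $\sigma|_{\sigma^{-1}(I)}:\sigma^{-1}(I)\to I$ is accurate, the identification of the odd-position set of $\sigma^{-1}.X$ with $\sigma^{-1}(I)$ is right, and the reduction to multiplicativity of the ordinary signature via order-isomorphisms to $\{1,\dots,k\}$ is the standard clean way to finish. Part (2) then follows exactly as you wrote. In short, your proposal fills in what the paper leaves as a citation.
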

For each  $n\in \mathbb{N},$ define   $\mathcal{F}_{n,\alpha}(V,W)$ as  the vector space of all homogeneous $n$-linear mappings  $f:V \underset{n\; times}{\underbrace{\times\cdots\times}}V\to W$ of degree $\alpha.$ Define  $\mathcal{F}_{n}(V,W)=\mathcal{F}_{n,0}(V,W)\oplus \mathcal{F}_{n, 1}(V,W)$,  $\mathcal{F}_{0}(V,W)=W$ and  $\mathcal{F}_{-n}(V,W)=0$,  $\forall n\in \mathbb{N}.$  Take $\mathcal{F}(V,W)=\bigoplus_{n\in \mathbb{Z}}\mathcal{F}_{n}(V,W)$. 

For $F\in \mathcal{F}_n(V,W)$, $X\in V^n,$ $\sigma\in S_n,$ define 
$$(\sigma.F)(X)=\epsilon(\sigma, X)F(\sigma^{-1}X).$$ 

By using Lemma \ref{SFL1},  one concludes that this defines an action of   $S_n$  on the  $\mathbb{Z}_2$-graded vector space  $\mathcal{F}_n(V,W)$.
Define  $\mathcal{E}_n$ for  $n\in \mathbb{Z}$ as follows:\\
 Set  $\mathcal{E}_n=\{F\in \mathcal{F}_{n+1}(V,V) : \sigma.F=F,\;\forall \;\sigma\in S_{n+1} \}$, for $n\ge 0$ and 
$$\mathcal{E}_n=  \begin{cases}
    V& \textit{if}\; n=-1\\
    0& \textit{if}\;n< -1
\end{cases}.$$
Write $\mathcal{E}=\bigoplus_{\in \mathbb{Z}}\mathcal{E}_n$. Define a product $\circ$ on $\mathcal{E}$ as follows:
For $F\in \mathcal{E}_{n,f}$,  $F'\in \mathcal{E}_{n',f'}$  set 
$$F\circ F'=\sum_{\sigma\in S_{(n,n'+1)}}\sigma.(F*F'),$$
where $$F*F'(X_1, \ldots, X_{n+n'+1})=(-1)^{f'(x_1+\cdots+x_n)}F(X_1,\ldots,X_n, F'(X_{n+1},\ldots,X_{n+n'+1})),$$ for $X_i\in V_{x_i}$, and $S_{(n,n'+1)}$ consists of permutations $\sigma\in S_{n+n'+1}$ such that  $\sigma(1)<\cdots<\sigma(n)$, $\sigma(n+1)<\cdots<\sigma(n+n'+1).$
Clearly, $F\circ F'\in \mathcal{E}_{(n+n',f+f')}$. We have following Lemma \cite{MR1028197}.
\begin{lem}\label{PLA1}
 For $F\in \mathcal{E}_{n,f}$,  $F'\in \mathcal{E}_{n',f'}$,   $F''\in \mathcal{E}_{n'',f''}$  
 $$(F\circ F')\circ F'' -F\circ (F'\circ F'')= (-1)^{n'n''+f'f''}\{(F\circ F'')\circ F'-F\circ ( F''\circ F')\}.$$
\end{lem}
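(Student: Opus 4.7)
The plan is to expand both compositions on the left-hand side using the shuffle-sum definition of $\circ$ and then regroup the resulting terms according to the geometric placement of the arguments of $F''$. Concretely, $(F\circ F')\circ F''$ is a double shuffle sum: first a $\sigma\in S_{(n,n'+1)}$ places the inputs of $F'$ among the $n+1$ inputs of $F$, then a $\tau\in S_{(n+n',n''+1)}$ places the inputs of $F''$ among the $n+n'+1$ resulting slots. Each term is then classified by whether the block of $n''+1$ arguments fed into $F''$ lands inside the block of $n'+1$ arguments previously assigned to $F'$, or lands inside one of the $n$ remaining "outer" slots of $F$.

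When the $F''$-block lies inside the $F'$-block, the sum rearranges (after a reindexing of the shuffles into an outer-then-inner form) into $F\circ(F'\circ F'')$, because the inner part of the two-step insertion is precisely an $F'\ast F''$ evaluation followed by an outer $F$-insertion. The delicate point here is sign-bookkeeping: the Koszul factor $(-1)^{f'(x_1+\cdots+x_n)}$ coming from the definition of $F\ast F'$ must be matched against the two successive $\epsilon$-signs from the $S_{n+n'+1}$- and $S_{n+n'+n''+1}$-actions. Lemma \ref{SFL1}(2) is exactly the tool needed: it lets one combine the two successive permutations into a single one with the correctly twisted sign. After this matching, all "inner" terms cancel with $F\circ(F'\circ F'')$, so that the left-hand side equals the sum $T(F;F',F'')$ over placements in which the $F'$- and $F''$-blocks occupy two distinct outer slots of $F$.

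Running the identical analysis on the right-hand side yields $T(F;F'',F')$, the analogous sum with $F'$ and $F''$ interchanged, and the identity reduces to the symmetry
\[
T(F;F',F'')\;=\;(-1)^{n'n''+f'f''}\,T(F;F'',F').
\]
To prove this, I would set up a sign-sensitive bijection: for each placement with disjoint $F'$- and $F''$-blocks, swap the two blocks (together with their internal arguments) by a single permutation of the outer argument list. The signature of this permutation contributes $(-1)^{(n'+1)(n''+1)}$, while the Koszul part of $\epsilon$ picks up a factor $(-1)^{f'f''}$ from moving the $F'$-evaluation past the $F''$-evaluation. The $S_{n+1}$-invariance built into the definition of $\mathcal{E}_n$ absorbs the remaining "spurious" factor of $(-1)^{n'+n''+1}$ coming from the transposition of blocks of odd-shifted sizes.

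The main obstacle, as in similar pre-Lie calculations, is precisely this sign collapse: showing that the signature contribution, the Koszul contribution from crossing arguments, and the internal Koszul factor from $F\ast F'$ conspire to produce exactly $(-1)^{n'n''+f'f''}$ and nothing else. Once the bijection-with-sign is verified via repeated use of Lemma \ref{SFL1}, assembling the three observations above yields the claimed identity and exhibits $\mathcal{E}$ as a $(\mathbb{Z}\times\mathbb{Z}_2)$-graded pre-Lie algebra, from which the graded Lie bracket underpinning the Maurer-Cartan characterization will be obtained in the sequel.
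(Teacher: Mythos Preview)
The paper does not supply its own proof of this lemma; it simply records the statement and cites \cite{MR1028197} for the argument. Your sketch follows the standard route used there (and in the earlier Nijenhuis--Richardson computation for the ungraded case): split the double shuffle sum defining $(F\circ F')\circ F''$ according to whether the output of $F''$ is fed into one of the $n'+1$ slots consumed by $F'$ or into one of the remaining $n$ slots of $F$; identify the first block of terms with $F\circ(F'\circ F'')$; and then exhibit a sign-carrying bijection between the ``outer'' terms on the two sides. The strategy is sound, and your identification of Lemma~\ref{SFL1}(2) as the tool for collapsing successive $\epsilon$-factors is correct.

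One caution on the final sign collapse. The factor you want is not recovered solely from the signature $(-1)^{(n'+1)(n''+1)}$ of the block swap together with a single Koszul contribution $(-1)^{f'f''}$. The $\mathbb{Z}_2$-degrees of the inserted values $F'(X_\bullet)$ and $F''(Y_\bullet)$ depend on their arguments, so the Koszul part of $\epsilon$ produced by the $S_{n+1}$-invariance of $F$ when you transpose those two inputs is $(-1)^{(f'+\sum x_\bullet)(f''+\sum y_\bullet)}$, not $(-1)^{f'f''}$. The extra cross-terms $(-1)^{f'\sum y_\bullet}$, $(-1)^{f''\sum x_\bullet}$ and $(-1)^{(\sum x_\bullet)(\sum y_\bullet)}$ must be cancelled against the prefactors $(-1)^{f'(\cdots)}$, $(-1)^{f''(\cdots)}$ built into the definition of $\ast$ and against the $K(\sigma,X)$-part of $\epsilon$ coming from the outer shuffle. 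This bookkeeping is routine but is not captured by the three-factor product you wrote; you should carry it out explicitly on at least one representative term before declaring the identity proved.
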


Using Lemma \ref{PLA1}, we have following theorem   \cite{MR195995}, \cite{MR1028197} 
\begin{thm}\label{MLT3}
$\mathcal{E}$ is a $\mathbb{Z}\times \mathbb{Z}_2$-graded Lie algebra
  with the bracket  $[\;,\;]$ defined  by 
  $$ [F,F']=F\circ F'-(-1)^{nn'+ff'}F'\circ F,$$
  for $F\in \mathcal{E}_{n,f}$,  $F'\in \mathcal{E}_{n',f'}$
\end{thm}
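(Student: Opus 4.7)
The plan is to check the two defining axioms of a $\mathbb{Z}\times\mathbb{Z}_2$-graded Lie algebra for the bracket $[\cdot,\cdot]$: graded skew-symmetry and the graded Jacobi identity. Graded skew-symmetry is a one-line check directly from the definition. For $F\in\mathcal{E}_{n,f}$ and $F'\in\mathcal{E}_{n',f'}$, substituting into the definition of $[F',F]$ and factoring out $-(-1)^{nn'+ff'}$ (using $(-1)^{2(nn'+ff')}=1$) gives $[F',F]=-(-1)^{nn'+ff'}[F,F']$, which is the required graded antisymmetry.

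The real content is the graded Jacobi identity
\begin{equation*}
S \,:=\, (-1)^{nn''+ff''}[[F,F'],F''] \,+\, (-1)^{n'n+f'f}[[F',F''],F] \,+\, (-1)^{n''n'+f''f'}[[F'',F],F'] \,=\, 0.
\end{equation*}
I read Lemma~\ref{PLA1} as the statement that $(\mathcal{E},\circ)$ is a $\mathbb{Z}\times\mathbb{Z}_2$-graded right-symmetric (pre-Lie) algebra: its associator
\begin{equation*}
\mathrm{ass}(F,F',F'') \,:=\, (F\circ F')\circ F'' - F\circ(F'\circ F'')
\end{equation*}
is symmetric in its last two entries up to the sign $(-1)^{n'n''+f'f''}$. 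It is a standard algebraic fact that the commutator of any graded right-symmetric product satisfies the graded Jacobi identity, and the strategy is to carry this out explicitly.

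Concretely, I would expand each of the three double brackets in $S$ by applying the definition of $[\cdot,\cdot]$ twice, producing twelve $\circ$-products in $F,F',F''$. I would then regroup these twelve terms into three associator differences, each of the shape $\mathrm{ass}(G_1,G_2,G_3) - (-1)^{n_2 n_3 + f_2 f_3}\mathrm{ass}(G_1,G_3,G_2)$ for some permutation $(G_1,G_2,G_3)$ of $(F,F',F'')$ with bidegrees $(n_i,f_i)$; each such difference vanishes by Lemma~\ref{PLA1}. The main obstacle will be the bookkeeping of $\mathbb{Z}\times\mathbb{Z}_2$-bidegree signs: one must verify that the prefactor attached to each reassembled pair matches the Lemma~\ref{PLA1} sign modulo~$2$, which requires tracking the $\mathbb{Z}$-parity $n n'+n'n''+n''n$ and the $\mathbb{Z}_2$-parity $ff'+f'f''+f''f$ simultaneously across the rearrangement. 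Once the table of signs is set up, the cancellation is purely mechanical.
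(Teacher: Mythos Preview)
Your proposal is correct and follows exactly the route the paper indicates: the paper does not spell out a proof but simply states that the theorem follows from Lemma~\ref{PLA1} and cites \cite{MR195995,MR1028197}, which is precisely the graded pre-Lie (right-symmetric) $\Rightarrow$ graded Lie argument you outline. Your expansion into twelve $\circ$-terms and regrouping into three associator differences killed by Lemma~\ref{PLA1} is the standard execution of that idea, so there is nothing to add.
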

Let $G$ be a finite group acting on the vector spaces $V=V_0\oplus V_1$ and $W=W_0\oplus W_1$.  Denote by $\mathcal{F}_{n}^G(V,W)$ the vector space of $G$-equivariant elements of $\mathcal{F}_{n}(V,W),$ that is $F(gX_1,\ldots, gX_n)=gF(X_1,\ldots, X_n),$ for each $F\in \mathcal{F}_{n}^G(V,W),$ $(X_1,\ldots, X_n)\in V^n.$ Write $\mathcal{F}^{G}(V,W)= \bigoplus_{n\in \mathbb{Z}}\mathcal{F}_{n}^G(V,W)$. For $\sigma\in S_n$, $g\in G$,  $(X_1,\ldots, X_n)\in V^n$, we have
\begin{eqnarray}\label{ML2}
   \sigma.(gX_1,\ldots, gX_n)&=&(gX_ {\sigma^{-1}(1)},\ldots, gX_{\sigma^{-1}(n)})\notag\\
   &=&g(X_ {\sigma^{-1}(1)},\ldots, X_{\sigma^{-1}(n)})\notag\\
   &=&g(\sigma.(X_1,\ldots, X_n)). 
\end{eqnarray}
Let  $F\in \mathcal{E}_{n,f}^G$,  $F'\in \mathcal{E}_{n',f'}^G$.  Clearly,  $F*F'\in \mathcal{E}_{n+n',f+f'}^G$. 
Using Equation \ref{ML2}, we conclude that $F\circ F'\in \mathcal{E}_{n+n',f+f'}^G$. This implies that $[\;,\;]$ defines a product in $\mathcal{E}^G.$ Hence using Theorem \ref{MLT3}, we have following theorem.
\begin{thm}
    $\mathcal{E}^G$ is a $\mathbb{Z}\times \mathbb{Z}_2$-graded  Lie algebra with 
  with the bracket  $[\;,\;]$ defined  by 
  $$ [F,F']=F\circ F'-(-1)^{nn'+ff'}F'\circ F,$$
  for $F\in \mathcal{E}_{n,f}$,  $F'\in \mathcal{E}_{n',f'}$
\end{thm}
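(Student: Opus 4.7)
The plan is to show that $\mathcal{E}^G$ is a graded Lie subalgebra of $\mathcal{E}$, so that the $\mathbb{Z} \times \mathbb{Z}_2$-graded Lie algebra structure furnished by Theorem \ref{MLT3} descends to $\mathcal{E}^G$ with no further identities to verify. Since Theorem \ref{MLT3} already provides graded skew-symmetry and the graded Jacobi identity on the ambient space $\mathcal{E}$, the work consists in checking that $\mathcal{E}^G$ is a graded subspace and that the bracket $[\,,\,]$ restricts to it.

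First I would note that the $G$-action on $V$ has degree $0$, so the $\mathbb{Z}_2$-grading on $\mathcal{F}_{n+1}(V,V)$ restricts to its $G$-equivariant part, giving $\mathcal{E}_n^G = \mathcal{E}_{n,0}^G \oplus \mathcal{E}_{n,1}^G$ and hence the desired bigrading $\mathcal{E}^G = \bigoplus_{n \in \mathbb{Z},\, f \in \mathbb{Z}_2} \mathcal{E}_{n,f}^G$. I would also observe that the symmetrization condition $\sigma.F = F$ is preserved under the $G$-action, so that $\mathcal{E}_n^G$ is well defined inside $\mathcal{F}_{n+1}^G(V,V)$.

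For closure under the bracket, since $[F,F'] = F \circ F' - (-1)^{nn'+ff'} F' \circ F$, it is enough to show that $F \circ F' \in \mathcal{E}_{n+n', f+f'}^G$ whenever $F \in \mathcal{E}_{n,f}^G$ and $F' \in \mathcal{E}_{n',f'}^G$. I would verify this in two steps. First, $F * F'$ is $G$-equivariant: using $K$-linearity of the $G$-action and the equivariance of $F$ and $F'$,
\[
(F*F')(gX_1,\dots,gX_{n+n'+1}) = (-1)^{f'(x_1+\cdots+x_n)} F\bigl(gX_1,\dots,gX_n, gF'(X_{n+1},\dots,X_{n+n'+1})\bigr),
\]
which equals $g(F*F')(X_1,\dots,X_{n+n'+1})$. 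Second, for each shuffle $\sigma \in S_{(n,n'+1)}$, Equation \ref{ML2} shows that the $S_n$-action on tuples commutes with the diagonal $G$-action, so $\sigma.(F*F')$ is again $G$-equivariant. A finite sum of equivariant maps is equivariant, hence $F \circ F' \in \mathcal{E}_{n+n',f+f'}^G$, which is precisely the observation the authors record just before the theorem.

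There is no genuine obstacle in this statement; the only substantive computation — that the composition $\circ$ preserves $G$-equivariance — has already been carried out via Equation \ref{ML2}. My proof would simply package that observation together with the appeal to Theorem \ref{MLT3}, concluding that $\mathcal{E}^G$ inherits the structure of a $\mathbb{Z} \times \mathbb{Z}_2$-graded Lie algebra under the stated bracket.
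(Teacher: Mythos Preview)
Your proposal is correct and follows essentially the same approach as the paper: the paper's argument consists precisely of noting that $F*F'$ is $G$-equivariant, invoking Equation~\ref{ML2} to conclude $F\circ F'\in \mathcal{E}_{n+n',f+f'}^G$, and then appealing to Theorem~\ref{MLT3}. Your write-up is somewhat more detailed in checking the bigrading and the equivariance of $F*F'$, but the strategy is identical.
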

Using \cite{MR1028197}, Proposition $(3.1)$, we get following theorem.
\begin{thm}
    Given $F_0\in \mathcal{E}_{(1,0)}^G$, $F_0$ defines on a $\mathbb{Z}_2$-graded $G$-vector space $V$   a $G$-Lie superalgebra structure if and only if  $[F_0,F_0]=0.$
\end{thm}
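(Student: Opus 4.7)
The plan is to unpack the assumption $F_0\in\mathcal{E}_{(1,0)}^G$ and then identify $[F_0,F_0]=0$ with the super-Jacobi identity; the remaining Lie superalgebra axioms will already be built into the definition of $\mathcal{E}_{(1,0)}^G$.

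First I would observe that membership of $F_0$ in $\mathcal{E}_{(1,0)}^G$ forces three of the four $G$-Lie-superalgebra axioms automatically: bilinearity from $F_0\in\mathcal{F}_2(V,V)$, compatibility with the grading $F_0(V_\alpha\times V_\beta)\subseteq V_{\alpha+\beta}$ from the total degree being $0$, and $G$-equivariance from the superscript $G$. Graded antisymmetry comes from the $S_2$-invariance: for the transposition $\tau=(1\,2)$ one computes $\epsilon(\tau,(X_1,X_2))=-(-1)^{x_1 x_2}$, so $\tau.F_0=F_0$ reads $F_0(X_1,X_2)=-(-1)^{x_1 x_2}F_0(X_2,X_1)$. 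Only the super-Jacobi identity remains to be matched with $[F_0,F_0]=0$.

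To make that match I would compute $[F_0,F_0]$ directly. Since $n=1,\,f=0$, the sign $(-1)^{nn'+ff'}$ equals $-1$, so $[F_0,F_0]=2\,F_0\circ F_0$, and (in characteristic different from $2$) the condition reduces to $F_0\circ F_0=0$. The shuffle set $S_{(1,2)}\subset S_3$ contains exactly three elements: the identity, the transposition $(1\,2)$, and the $3$-cycle $\sigma$ with $(\sigma(1),\sigma(2),\sigma(3))=(3,1,2)$. For each I would evaluate $\epsilon(\sigma,X)$ from the definition of $K(\sigma,X)$ as an inversion count among odd-parity slots, obtaining respectively $1$, $-(-1)^{x_1 x_2}$, and $(-1)^{x_3(x_1+x_2)}$. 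Substituting into the formula for $\circ$, the expression $(F_0\circ F_0)(X_1,X_2,X_3)$ becomes three iterated brackets of $F_0$ which, after using graded antisymmetry to bring $F_0(X_1,X_2)$ to the left in the third summand, reassemble into precisely the super-Jacobi expression $[X_1,[X_2,X_3]]-[[X_1,X_2],X_3]-(-1)^{x_1 x_2}[X_2,[X_1,X_3]]$. All steps are reversible, so $F_0\circ F_0=0$ is equivalent to the super-Jacobi identity, and combined with the first paragraph this yields the theorem.

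The main obstacle is purely combinatorial sign bookkeeping: the three factors $\epsilon(\sigma,X)$ have to be computed carefully because $K(\sigma,X)$ counts inversions only among odd-parity slots. A shorter alternative is to cite Proposition $3.1$ of \cite{MR1028197}, whose argument transfers verbatim once one checks that the graded bracket on $\mathcal{E}$ restricts to $\mathcal{E}^G$, which was already observed just before the statement.
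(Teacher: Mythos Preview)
Your proof is correct. The sign computations for the three $(1,2)$-shuffles are accurate, and the identification of $F_0\circ F_0=0$ with the super-Jacobi identity is carried out cleanly; the observation that $[F_0,F_0]=2F_0\circ F_0$ tacitly assumes $\operatorname{char}K\ne 2$, which you flag.

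The paper, however, does not argue this at all: its entire proof is the sentence ``Using \cite{MR1028197}, Proposition~(3.1), we get following theorem.'' In other words, the paper takes exactly the shortcut you list as your ``shorter alternative'' at the end, relying on the fact (established just before the statement) that the bracket on $\mathcal{E}$ restricts to $\mathcal{E}^G$, so the non-equivariant result transfers verbatim. Your direct unpacking of the shuffle sum is a genuinely more self-contained route; it buys independence from the reference and makes the correspondence between $F_0\circ F_0$ and the Jacobi expression explicit, at the cost of the sign bookkeeping you note. The paper's approach buys brevity but leaves the reader to consult \cite{MR1028197} for the actual content.
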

\begin{rem}
    An element $F_0\in \mathcal{E}_{(1,0)}^G$ which satisfies the equation
    \begin{equation}\label{MCE1}
        [F,F]=0
    \end{equation}
    is  called a Maurer-Cartan element and the Equation \ref{MCE1} is called Maurer-Cartan equation. Thus the class of Maurer-Cartan elements is the class of $G$-Lie superalgebra structures on a $\mathbb{Z}_2$-graded $G$-vector space $V$.
\end{rem}
\section{Equivariant Cohomology of Lie Superalgebras}\label{rbsec3}
  Let $L=L_0\oplus L_1$ be a Lie superalgebra and $M=M_0\oplus M_1$ be a module over $L$. For each $n\ge 0$,  a $K$-vector space  $C^{n}(L;M)$ is defined  as follows:  $C^0(L;M)=M$ and for $n\ge 1$, $C^{n}(L;M)$ consists of  those $n$-linear maps   $ f$ from $L^n$ to $M$  which are  homogeneous and 
  $$ f(x_1,\ldots,x_i,x_{i+1},\ldots, x_n)=-(-1)^{x_ix_{i+1}}f(x_1,\ldots,x_{i+1},x_i\ldots, x_n).$$
Clearly,  $C^{n}(L;M)=C^{n}_0(L;M)\oplus C^{n}_1(L;M)$, where $C^{n}_0(L;M)$ and $ C^{n}_1(L;M)$ are vector subspaces of $C^{n}(L;M)$ containing elements of degree $0$ and $1$, respectively. A  linear map $\delta^n:C^{n}(L;M)\to C^{n+1}(L;M)$ is defined  by (\cite{MR871615}, \cite{MR0422372})
  \begin{eqnarray}\label{ECLS1}
    &&\delta^n f(x_1,\cdots, x_{n+1})\notag\\
     &=& \sum_{i<j}(-1)^{i+j+(x_i+x_j)(x_1+\cdots+x_{i-1})+x_j(x_{i+1}+\cdots+x_{j-1})}\notag\\&&f([x_i,x_j], x_1,\ldots,\hat{x_i},\ldots,\hat{x_j},\ldots,x_{n+1}) \notag\\
     && +\sum_{i=1}^{n+1}(-1)^{i-1+x_i(f+x_1+\cdots+x_{i-1})}[x_i,f(x_1,\ldots,\hat{x_i},\ldots,x_{n+1})],\\
  \end{eqnarray}  for all $f\in C^{n}(L;M)$, $n\ge 1$, and $\delta^0f(x_1)=(-1)^{x_1f}[x_1,f]$, for all $f\in C^0(L;M)=M$.   Clearly, for each $f\in C^{n}_G(L;M)$, $n\ge 0,$  $\deg(\delta f)=\deg(f).$
 From \cite{MR871615}, \cite{MR0422372}, we have following theorem: 
\begin{thm}\label{ECLS2}
    $\delta^{n+1}\circ \delta^n=0$, that is, $(C^{\ast}(L;M),\delta)$, where $C^{\ast}(L;M)=\oplus_n C^{n}(L;M)$, $\delta=\oplus_n\delta^n$, is a cochain complex.
  \end{thm}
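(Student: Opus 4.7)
The plan is to reduce $\delta^{n+1}\circ\delta^n=0$ to the Maurer-Cartan equation by reinterpreting the differential as a graded bracket in the $\mathbb{Z}\times\mathbb{Z}_2$-graded Lie algebra of Section \ref{MCC}; the vanishing of $\delta^2$ then becomes a consequence of graded Jacobi rather than a brute expansion of (\ref{ECLS1}).

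First I would form the semidirect-product Lie superalgebra $\tilde{L}=L\oplus M$, where $M$ is regarded as an abelian ideal on which $L$ acts via the module structure:
\[
[(x,m),(y,n)] \;=\; \bigl([x,y],\; [x,n]-(-1)^{xy}[y,m]\bigr).
\]
Super-antisymmetry and super-Jacobi for this bracket follow from the module axiom and the superalgebra axioms for $L$. Equivalently, the bracket is a Maurer-Cartan element $\tilde{F}_0\in\mathcal{E}_{1,0}$ of the $\mathbb{Z}\times\mathbb{Z}_2$-graded Lie algebra $\mathcal{E}$ built from the $\mathbb{Z}_2$-graded space $\tilde{L}$, and the super-Jacobi identity for $\tilde{L}$ is precisely $[\tilde{F}_0,\tilde{F}_0]=0$.

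Next I would embed $C^n(L;M)$ into $\mathcal{E}_{n-1}$ by sending $f\in C^n_\alpha(L;M)$ to the map $\tilde{f}:\tilde{L}^n\to\tilde{L}$ which vanishes whenever any input has nonzero $M$-component and otherwise returns the value of $f$ on the $L$-components, placed in the $M$-summand of $\tilde{L}$. The super-antisymmetry of $f$ together with the sign conventions defining $\mathcal{E}_{n-1}$ make $\tilde{f}$ an element of $\mathcal{E}_{n-1,\alpha}$. The key identification is
\[
\widetilde{\delta^n f} \;=\; [\tilde{F}_0,\tilde{f}]
\]
in $\mathcal{E}$. Unfolding $\tilde{F}_0\circ\tilde{f}$ and $\tilde{f}\circ\tilde{F}_0$ via the shuffle definition of $\circ$ reproduces the two summations in (\ref{ECLS1}): shuffles that feed the output of $\tilde{F}_0$ into $\tilde{f}$ force both $\tilde{F}_0$-inputs into $L$ (else $\tilde{f}$ vanishes), yielding the $f([x_i,x_j],\ldots)$-terms, while shuffles that feed the output of $\tilde{f}$ into $\tilde{F}_0$ activate the module action, yielding the $[x_i,f(\ldots)]$-terms. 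Matching the permutation signs $\epsilon(\sigma,X)$ with the Koszul prefactors in (\ref{ECLS1}) is the bookkeeping I expect to be the main obstacle.

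With the identification $\delta=[\tilde{F}_0,-]$ in hand, graded Jacobi in $\mathcal{E}$ gives
\[
\delta^2 f \;=\; \bigl[\tilde{F}_0,[\tilde{F}_0,\tilde{f}]\bigr] \;=\; \tfrac{1}{2}\bigl[[\tilde{F}_0,\tilde{F}_0],\tilde{f}\bigr] \;=\; 0,
\]
the middle equality using that $\tilde{F}_0$ has odd total parity in the $\mathbb{Z}\times\mathbb{Z}_2$-grading. A purely combinatorial alternative would expand $\delta^{n+1}(\delta^n f)$ and cancel terms in three groups---those invoking super-Jacobi on $L$, those invoking the $L$-module axiom on $M$, and pairs of terms in which the same bracketed pair of arguments appears twice---but the sign analysis required is essentially equivalent to the one hidden inside the $\mathcal{E}$-approach above.
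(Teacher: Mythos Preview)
Your argument is correct, but note that the paper does not actually prove this theorem: the sentence immediately preceding it reads ``From \cite{MR871615}, \cite{MR0422372}, we have following theorem,'' and no proof is given. So you are supplying a proof where the authors simply invoke the literature.

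The route you chose is the Nijenhuis--Richardson one, and it is exactly the one the paper's Section~\ref{MCC} sets up (following \cite{MR1028197}) but never exploits for this purpose. The semidirect product $\tilde L=L\oplus M$, the embedding $f\mapsto\tilde f$ into $\mathcal{E}_{n-1}$, and the identification of $\delta$ with $[\tilde F_0,-]$ are all standard and sound. Two small caveats: first, the precise equality $\widetilde{\delta^n f}=[\tilde F_0,\tilde f]$ often holds only up to an overall sign depending on $n$ and the degree of $f$ (different conventions in the literature differ here), but since any such sign cancels in $\delta^{n+1}\circ\delta^n$ this does not affect your conclusion; second, your use of $\tfrac12$ in the last display tacitly assumes $\operatorname{char}K\neq 2$, which the paper never states but which is implicit in any sensible treatment of Lie superalgebras. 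Compared with the direct combinatorial expansion of (\ref{ECLS1}) carried out in \cite{MR0422372}, your approach trades a long sign computation for a single conceptual step, at the cost of having to verify the identification $\delta=[\tilde F_0,-]$---which, as you correctly flag, is where all the Koszul bookkeeping resurfaces.
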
  
Let $G$ be a finite group which acts on $L$. Let $M$ be a $G$-module over $L$.   For each $n\ge 0$, we define  a $K$-vector space  $C^{n}_G(L;M)$ as follows:  $C^0_G(L;M)=M$ and for $n\ge 1$, $C^{n}_G(L;M)$ consists of  those  $ f\in C^n(L,M)$  which are  $G$-equivariant, that is, $f(ga_1, \ldots, ga_n)=gf(a_1, \ldots, a_n)$, for all $(a_1, \ldots, a_n)\in L^{n},$ $g\in G.$
Clearly,  $C^{n}_G(L;M)=(C_G^{n})_0(L;M)\oplus (C_G^{n})_1(L;M)$, where $(C_G^{n})_0(L;M)$ and $ (C_G^{n})_1(L;M)$ are vector subspaces of $C^{n}_G(L;M)$ containing elements of degree $0$ and $1$, respectively. We define a  $K$-linear map $\delta^n_G:C^{n}_G(L;M)\to C^{n+1}_G(L;M)$  by 
$$\delta^n_G f(x_1,\ldots, x_{n+1})=\delta^n f(x_1,\ldots, x_{n+1}) .$$

Clearly, $\delta^n_Gf(gx_1,\ldots, gx_{n+1})=g\delta^n f(x_1,\ldots, x_{n+1})$ for each $f\in C^{n}_G(L;M)$, $g\in G.$ Thus $\delta^n_G$ is well defined.  Write $C^{\ast}_G(L;M)=\oplus_n C^{n}_G(L;M)$, $\delta_G=\oplus_n\delta^n_G$. Using Theorem \ref{ECLS2} we have following theorem:
\begin{thm}
$(C^{\ast}_G(L;M),\delta_G)$ is a cochain complex.
\end{thm}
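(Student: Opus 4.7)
The plan is to reduce the statement to Theorem \ref{ECLS2} by showing that $(C^{\ast}_G(L;M),\delta_G)$ is a subcomplex of $(C^{\ast}(L;M),\delta)$. There are really two things to check: first, that $\delta^n_G$ is well-defined, i.e., that $\delta^n$ carries the $G$-equivariant subspace $C^n_G(L;M)$ into $C^{n+1}_G(L;M)$; second, that $\delta^{n+1}_G \circ \delta^n_G = 0$.

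For the first step, I would fix $f\in C^n_G(L;M)$ and $g\in G$ and compute $\delta^n f(gx_1,\ldots, gx_{n+1})$ directly from the defining formula (\ref{ECLS1}). Term by term, two $G$-equivariance properties feed in: the bracket on $L$ satisfies $[gx_i, gx_j] = g[x_i,x_j]$ because $L$ is a $G$-Lie superalgebra, and the $L$-action on $M$ satisfies $[gx_i, gm] = g[x_i,m]$ because $M$ is a $G$-module over $L$. Using these along with the $G$-equivariance of $f$ itself, each summand of $\delta^n f(gx_1,\ldots,gx_{n+1})$ equals $g$ applied to the corresponding summand of $\delta^n f(x_1,\ldots,x_{n+1})$. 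Summing gives $\delta^n f(gx_1,\ldots,gx_{n+1}) = g\,\delta^n f(x_1,\ldots,x_{n+1})$, so $\delta^n f \in C^{n+1}_G(L;M)$. The degree condition $\deg(\delta^n f) = \deg(f)$ is already observed in the text, and the graded antisymmetry is inherited from $\delta^n$, so $\delta^n f$ does land in $C^{n+1}_G(L;M)$.

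For the second step, I observe that by construction $\delta^n_G$ is nothing but the restriction of $\delta^n$ to $C^n_G(L;M)$. Thus for any $f \in C^n_G(L;M)$,
\[
\delta^{n+1}_G(\delta^n_G f) \;=\; \delta^{n+1}(\delta^n f) \;=\; 0,
\]
where the last equality is exactly Theorem \ref{ECLS2}. This gives $\delta^{n+1}_G\circ \delta^n_G = 0$, completing the proof.

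The only mildly nontrivial step is the $G$-equivariance verification of $\delta^n f$, and even there the main obstacle is bookkeeping rather than content: one must confirm that the signs and permutation data in (\ref{ECLS1}) depend only on the degrees of the $x_i$'s, which are preserved by the $G$-action (since $G$ acts by degree-zero maps). Once this is observed, the two equivariance properties of $[-,-]$ propagate through every summand uniformly and the conclusion follows.
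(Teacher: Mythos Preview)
Your proposal is correct and follows essentially the same approach as the paper: the paper notes that $\delta^n_G f(gx_1,\ldots,gx_{n+1})=g\,\delta^n f(x_1,\ldots,x_{n+1})$ so that $\delta^n_G$ is well-defined, and then invokes Theorem~\ref{ECLS2} to conclude. Your write-up simply spells out in more detail the equivariance verification that the paper treats as clear.
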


We denote $\ker(\delta^n_G)$ by $Z^n_G(L;M)$ and image of $(\delta^{n-1}_G)$ by $B^n_G(L;M)$.
We call  the $n$-th  cohomology $Z^n_G(L;M)/B^n_G(L;M)$ of the cochain complex $\{C^{n}_G(L;M),\delta^n_G\}$ as the  $n$-th equivariant cohomology of $L$ with coefficients in $M$ and  denote it by $H^{n}_G(L;M)$. Since $L$ is a module over itself. So we can consider  cohomology groups $H^{n}_G(L;L)$. We call $H^{n}_G(L;L)$ as the $n$-th equivariant  cohomology group of $L$.
We  have $$Z^n_G(L;M)=(Z_G^n)_0(L;M)\oplus (Z_G^n)_1(L;M),  \;B^n_G(L;M)=(B_G^n)_0(L;M)\oplus (B_G^n)_1(L;M),$$ where $(Z_G^n)_i(L;M)$and $(B_G^n)_i(L;M)$ are submodules of  $(C_G^n)_i(L;M)$, $i=0,1$. Since boundary map  $\delta^n_G:C^{n}_G(L;M)\to C^{n+1}_G(L;M)$ is homogeneous of degree $0$, we conclude that $H^{n}_G(L;M)$ is $\mathbb{Z}_2$-graded and $$H^{n}_G(L;M)=(H_G^{n})_0(L;M)\oplus (H_G^{n})_1(L;M),$$ where $(H_G^{n})_i(L;M)=(Z_G^n)_i(L;M)/(B_G^n)_i(L;M)$, $i=0,1$.

\section{Equivariant Cohomology of Lie Superalgebras in Low Degrees}\label{rbsec4}
Let $G$ be a finite group and  $L=L_0\oplus L_1$ be a Lie superalgebra with an action of $G$. Let  $M=M_0\oplus M_1$ be a $G$-module over $L.$ For $m\in M_0=(C_G^0)_0(L;M)$, $f\in (C_G^1)_0(L;M)$ and $g\in (C_G^2)_0(L;M)$
\begin{equation}\label{SLCOH1}
  \delta^0_G m(x)=[x,m],
\end{equation}
\begin{equation}\label{SLCOH2}
 \delta^1 f(x_1,x_2)=-f([x_1,x_2])+[x_1,f(x_2)]-(-1)^{x_2x_1}[x_2, f(x_1)],
\end{equation}
\begin{eqnarray}\label{SLCOH3}
   \delta^2 g(x_1,x_2,x_3)&=& -g([x_1,x_2],x_3)+(-1)^{x_3x_2}g([x_1,x_3],x_2)-(-1)^{x_1(x_2+x_3)}g([x_2,x_3], x_1)\nonumber\\
   &&+[x_1,g(x_2,x_3)]-(-1)^{x_2x_1}[x_2,g(x_1,x_3)]\nonumber\\
   &&+(-1)^{x_3x_1+x_3x_2}[x_3,g(x_1,x_2)].
\end{eqnarray}
 The set $\{m\in M_0|[x,m]=0, \forall x\in L\}$ is called annihilator of $L$ in $M_0$ and is denoted by $ann_{M_0}L$.  We have \begin{eqnarray*}
          (H_G^0)_0(L;M) &=& \{m\in M_0|[x,m]=0,\;\text{for all}\; x\in L\} \\
          &=& ann_{M_0}L.
         \end{eqnarray*}
 A $G$-equivariant homogeneous linear map $f:L\to M\;$ is called  derivation from $L\;$ to $M\;$ if $f([x_1,x_2])=(-1)^{fx_1}[x_1,f(x_2)]-(-1)^{fx_2+x_2x_1}[x_2, f(x_1)],$ that is $\delta^1_Gf=0.$  For every $m\in M_0$ the map $x\mapsto [x,m]$ is called an inner derivation from  $L$ to $M$. We denote the vector spaces of equivariant derivations and equivariant inner derivations from  $L$ to $M$ by $Der^G(L;M)$ and $Der_{Inn}^G(L;M)$ respectively. By using \ref{SLCOH1}, \ref{SLCOH2} we have $$(H_G^1)_0(L;M)=Der^G(L;M)/Der_{Inn}^G(L;M).$$

Let $L$ be a Lie superalgebra with an action of a finite group $G$ and $M$ be a $G$-module over $L.$ We regard $M$ as an abelian Lie superalgebra with an action of $G$.   An extension of $L$ by $M$ is an exact sequence
  \[\xymatrix{0\ar[r]& M\ar[r]^i &\mathcal{E}\ar[r]^\pi &L\ar[r] &0 }\tag{*}\]
 of Lie superalgebras  such that $$[x,i(m)]=[\pi(x),m].$$
 The exact sequence $(*)$ regarded  as a sequence of $K$-vector spaces, splits. Therefore without any loss of generality we may assume that  $\mathcal{E}$ as a  $K$-vector space coincides with the direct sum $L\oplus M$ and that $i(m)=(0,m),$ $\pi(x,m)=x.$ Thus we have  $\mathcal{E}=\mathcal{E}_0\oplus \mathcal{E}_1,$ where $\mathcal{E}_0=L_0\oplus M_0$, $\mathcal{E}_1=L_1\oplus M_1.$ The multiplication in $\mathcal{E}=L\oplus M$ has then necessarily the form $$ [(0,m_1),(0,m_2)]   =  0,\;  [(x_1,0),(0,m_1)] = (0,[x_1,m_1]),$$
 $$[(0,m_2),(x_2,0)] = -(-1)^{m_2x_2}(0,[x_2,m_2]),\; [(x_1,0),(x_2,0)] = ([x_1,x_2],h(x_1,x_2)),$$  for some $h\in (C_G^2)_0(L;M)$, for all  homogeneous $x_1,x_2\in L$, $m_1,m_2\in M.$
Thus, in general, we have
 \begin{equation}\label{SLCOH7}
[(x,m),(y,n)]=([x,y],[x,n]-(-1)^{my}[y,m]+h(x,y)),
\end{equation}
for all  homogeneous $(x,m)$, $(y,n)$ in $\mathcal{E}=L\oplus M.$\\
 Conversely, let $h:L\times L\to M$ be a bilinear  $G$-equivariant homogeneous map of degree $0$. For homogeneous $(x,m)$, $(y,n)$ in $\mathcal{E}$ we define multiplication in $\mathcal{E}=L\oplus M$ by Equation \ref{SLCOH7}.
 For homogeneous $(x,m)$, $(y,n)$ and $(z,p)$ in $\mathcal{E}$ we have
\begin{eqnarray}\label{SLCOH4}
  &&[[(x,m),(y,n)],(z,p)]\nonumber\\
  &=&([[x,y],z],[[x,y],p]-(-1)^{zx+zn}[z[x,n]]+(-1)^{ym+zy+zm}[z,[y,m]]+[h(x,y),z]+h([x,y],z))\nonumber\\
\end{eqnarray}
\begin{eqnarray}\label{SLCOH5}
&&[(x,m),[(y,n),(z,p)]]\nonumber\\
&=&([x,[y,z]], [x,[y,p]]-(-1)^{nz}[x,[z,n]]-(-1)^{my+mz}[[y,z],m]+[x,h(y,z)]+h(x,[y,z])\nonumber\\
\end{eqnarray}
\begin{eqnarray}\label{SLCOH6}
&&[(y,n),[(x,m),(z,p)]]\nonumber\\
&=&([y,[x,z]],[y,[x,p]]-(-1)^{mz}[y,[z,m]]-(-1)^{nx+nz}[[x,z],n]+[y,h(x,z)]+h(y,[x,z]))\nonumber\\
\end{eqnarray}
 From Equations \ref{SLCOH4}, \ref{SLCOH5}, \ref{SLCOH6} we conclude that $\mathcal{E}=L\oplus M$ is a Lie superalgebra  with product given by Equation \ref{SLCOH7}  if and only if $\delta^2_G h=0.$
We denote the Lie superalgebra given by Equation \ref{SLCOH7} using notation $\mathcal{E}_h$. Thus for every cocycle $h\in (C_G^2)_0(L;M)$ there exists an extension
 \label{SLCOH8}
   \[E_h:\xymatrix{0\ar[r]& M\ar[r]^i &\mathcal{E}_h\ar[r]^\pi &L\ar[r] &0 }\]
of $L$ by $M$, where $i$ and $\pi$ are inclusion and projection maps, that is, $i(m)=(0,m),$ $\pi(x,m)=x$.
 We say that two  extensions
  \[\xymatrix{0\ar[r]& M\ar[r] &\mathcal{E}^i\ar[r] &L\ar[r] &0 } \;(i=1,2)\]
  of $L$ by $M$ are equivalent if there is a $G$-equivariant Lie superalgebra isomorphism $\psi:\mathcal{E}^1\to \mathcal{E}^2$ such that following diagram commutes:
\[
\xymatrix{
  0 \ar[r] & M \ar[d]_{Id_M} \ar[r]^-{} & \mathcal{E}^1 \ar[d]_-{\psi} \ar[r]^-{} & L \ar[d]^-{Id_L} \ar[r] & 0 \\
  0 \ar[r] & M \ar[r]_-{} & \mathcal{E}^2 \ar[r]_-{} & L \ar[r] & 0
}
\tag{**}\]
We use $F(L,M)$to denote the set of all equivalence classes of extensions of   $L$ by $M$. Equation \ref{SLCOH7} defines a mapping of $(Z_G^2)_0(L;M)$
 onto $F(L,M)$. If for $h,h'\in (Z_G^2)_0(L;M)$ $E_h$ is equivalent to $E_{h'}$, then commutativity of diagram $(**)$ is equivalent to $$ \psi(x,m)=(x,m+f(x)),$$  for some $f\in (C_G^1)_0(L;M)$.
 We have
 \begin{eqnarray}
   \psi([(x_1,m_1),(x_2,m_2)]) &=& \psi([x_1,x_2],[x_1,m_2]+[m_1,x_2]+h(x_1,x_2))\nonumber \\
    &=& ([x_1,x_2],[x_1,m_2]+[m_1,x_2]+h(x_1,x_2)+f([x_1,x_2])),\nonumber\\
 \end{eqnarray}
  \begin{eqnarray}
    [\psi(x_1,m_1),\psi(x_2,m_2)] &=& [(x_1,m_1+f(x_1)),(x_2,m_2+f(x_2))] \nonumber\\
     &=& ([x_1,x_2],[x_1,m_2+f(x_2)]+[m_1+f(x_1),x_2]+h'(x_1,x_2)).\nonumber\\
  \end{eqnarray}
 Since $\psi([(x_1,m_1),(x_2,m_2)])=[\psi(x_1,m_1),\psi(x_2,m_2)] $, we have
 \begin{eqnarray}
   h(x_1,x_2)-h'(x_1,x_2) &=& - f([x_1,x_2])+[x_1,f(x_2)]+[f(x_1),x_2]\nonumber\\
   &=&- f([x_1,x_2])+[x_1,f(x_2)]-(-1)^{x_1x_2}[x_2,f(x_1)]\nonumber\\
    &=& \delta^1(f)(x_1,x_2)
 \end{eqnarray}
Thus  two extensions $E_h$ and $E_{h'}$ are equivalent if and only if there exists some $f\in (C_G^1)_0(L;M)$ such that $\delta^1f = h-h'$. We thus have following theorem:

 \begin{thm}
   The set $F(L,M)$ of  all equivalence classes of  extensions of $L$ by $M$ is in one to one correspondence with the cohomology group $(H_G^2)_0(L;M)$. This correspondence $\omega :(H_G^2)_0(L;M)\to F(L,M)$ is obtained  by assigning to each cocycle $h\in (Z_G^2)_0(L;M)$, the extension given by multiplication \ref{SLCOH7}.
\end{thm}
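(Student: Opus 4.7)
The plan is to assemble the correspondence $\omega$ from the computations already carried out in the section and then check well-definedness, injectivity, and surjectivity in turn. None of these steps should introduce new ideas; the real content has already been extracted in the displayed identities just before the theorem statement.

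First I would define $\omega$ on the cocycle level by $h\mapsto [E_h]$, where $E_h$ is the extension built via the bracket in Equation \ref{SLCOH7}. To see that $E_h$ is genuinely an extension, I would invoke the Jacobi computation recorded in Equations \ref{SLCOH4}--\ref{SLCOH6}: those identities show that the bracket on $L\oplus M$ defined by Equation \ref{SLCOH7} satisfies the graded Jacobi identity precisely when $\delta_G^2 h=0$, so $h\in (Z_G^2)_0(L;M)$ produces a Lie superalgebra $\mathcal{E}_h$. $G$-equivariance of the bracket follows from the $G$-equivariance of the original $L$-action on $M$ together with $h\in C_G^2$, and the maps $i(m)=(0,m)$ and $\pi(x,m)=x$ are $G$-equivariant Lie superalgebra morphisms fitting into the required short exact sequence. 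One also needs that $[x,i(m)]=[\pi(x),m]$, which is immediate from Equation \ref{SLCOH7}.

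Next I would descend $\omega$ to cohomology. If $h,h'\in (Z_G^2)_0(L;M)$ differ by a coboundary $\delta^1 f$ with $f\in (C_G^1)_0(L;M)$, then the map $\psi:\mathcal{E}_h\to \mathcal{E}_{h'}$ defined by $\psi(x,m)=(x,m+f(x))$ is a $K$-linear isomorphism, obviously $G$-equivariant (because $f$ is), compatible with $i$ and $\pi$, and the bracket-compatibility calculation is exactly the chain of equalities displayed above the theorem statement: the obstruction to $\psi$ being a morphism is $h-h'-\delta^1 f$, which vanishes. Thus $\omega$ factors through $(H_G^2)_0(L;M)$ and lands in $F(L,M)$.

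For injectivity, I would run the argument in reverse. Given an equivalence $\psi:\mathcal{E}_h\to\mathcal{E}_{h'}$ compatible with the diagram $(**)$, the commutativity with $i$ and $\pi$ forces $\psi(x,m)=(x,m+f(x))$ for some $K$-linear $f:L\to M$, and $G$-equivariance of $\psi$ together with $G$-equivariance of the identification $\mathcal{E}\cong L\oplus M$ forces $f\in (C_G^1)_0(L;M)$. The bracket-compatibility equation derived above then reads $h-h'=\delta^1 f$, so $[h]=[h']$ in $(H_G^2)_0(L;M)$. For surjectivity, given an arbitrary extension, choose any $K$-linear $G$-equivariant section of $\pi$ of degree $0$ (which exists because one can average an arbitrary $K$-linear section over $G$); the section identifies $\mathcal{E}$ with $L\oplus M$ as a $G$-vector space, and the bracket on $\mathcal{E}$ must then have the form of Equation \ref{SLCOH7} for some bilinear $G$-equivariant $h$ of degree $0$, which is automatically a cocycle by the Jacobi identity in $\mathcal{E}$.

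The only step requiring a bit of care is the construction of a $G$-equivariant section in the surjectivity part, since the naive section need not commute with the $G$-action; this is the main, though still routine, obstacle and is handled by the standard averaging $s\mapsto |G|^{-1}\sum_{g\in G} g\cdot s\cdot g^{-1}$, which is legitimate because $G$ is finite and we work over a field $K$ of characteristic zero (or at least of characteristic not dividing $|G|$, a hypothesis implicit throughout this equivariant framework). Once the $G$-equivariant section is in hand, everything else is bookkeeping against the identities established in Equations \ref{SLCOH4}--\ref{SLCOH7}.
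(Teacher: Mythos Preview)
Your proposal is correct and follows essentially the same approach as the paper: the paper's proof is precisely the discussion preceding the theorem statement, built out of Equations \ref{SLCOH4}--\ref{SLCOH7} and the $\psi(x,m)=(x,m+f(x))$ calculation, which you have reproduced in the same order and with the same logic. The one place where you are actually more careful than the paper is the surjectivity step: the paper simply asserts that the sequence splits as $K$-vector spaces and proceeds as if the resulting identification $\mathcal{E}\cong L\oplus M$ were automatically $G$-equivariant, whereas you correctly note that one must average an arbitrary section over $G$ to obtain a $G$-equivariant one (and hence a $G$-equivariant cocycle $h$).
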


\section{ Equivariant Deformation of  Lie Superalgebras }\label{rbsec5}
Let $L=L_0\oplus L_1$ be a Lie superalgebra. We denote the ring of all formal power series with coefficients in $L$ by $L[[t]]$. Clearly,  $L[[t]]=L_0[[t]]\oplus L_1[[t]]$. So every $a_t\in L[[t]]$ is of the form $a_t=(a_t)_0\oplus (a_t)_1$, where $(a_t)_0\in L_0[[t]]$ and $(a_t)_1\in L_1[[t]]$.

\begin{defn}\label{rb2}
Let $L=L_0\oplus L_1$ be a Lie superalgebra with an action of a finite group $G$.  An equivariant  formal one-parameter deformation of  $L$ is a $K[[t]]$-bilinear map  $$\mu_t : L[[t]]\times L[[t]]\to L[[t]]$$ satisfying the following properties:
\begin{itemize}
  \item[(a)]  $\mu_t(a,b)=\sum_{i=0}^{\infty}\mu_i(a,b) t^i$, for all  $a,b\in L$, where $\mu_i:L\times L\to L$, $i\ge 0$ are  $G$-equivariant  bilinear homogeneous mappings of degree zero  and $\mu_0(a,b)=[a,b]$ is the original  product on L.
\item[(b)] $\mu_t(a,b)=-(-1)^{ab}\mu_t(b,a),$ for all homogeneous  $a,b\in L$.
\item[(c)]
\begin{equation}\label{DLT1}
   \mu_t( a,\mu_t(b,c))=\mu_t(\mu_t(a,b),c)+(-1)^{ab}\mu_t(b,\mu_t(a,c)),
  \end{equation}
for all homogeneous $a,b,c\in L$.
\end{itemize}
The Equation  \ref{DLT1} is equivalent to following equation:
   \begin{eqnarray}\label{rbeqn1}
      &&\sum_{i+j=r} \mu_i( a,\mu_j(b,c))\notag\\
      &=& \sum_{i+j=r}\{\mu_i(\mu_j(a,b),c)-(-1)^{ab}\mu_i(b,\mu_j(a,c))\},
   \end{eqnarray}
for all homogeneous $a,b,c\in L$.

\end{defn}

 Now we define a formal deformation of finite order of a Lie superalgebra $L$.
\begin{defn}\label{rb3}
Let $L$ be a Lie superalgebra with an action of a group $G$.  A  formal one-parameter deformation of order $n$  of  $L$ is a $K[[t]]$-bilinear map  $$\mu_t : L[[t]]\times L[[t]]\to L[[t]]$$ satisfying the following properties:
\begin{itemize}
  \item[(a)]  $\mu_t(a,b)=\sum_{i=0}^{n}\mu_i(a,b) t^i$,  $\forall a,b,c\in L$, where $\mu_i:L\times L\to T$, $0\le i\le n$, are  equivariant $K$-bilinear homogeneous maps of degree $0$, and $\mu_0(a,b)=[a,b]$ is the original product on $L$.
   \item[(b)] $\mu_i(a,b)=-(-1)^{ab}\mu_i(b,a),$ for all homogeneous  $a,b\in L$, $i\ge 0.$
      \item[(c)] \begin{equation}\label{FDLS}
   \mu_t( a,\mu_t(b,c))=\mu_t(\mu_t(a,b),c)+(-1)^{ab}\mu_t(b,\mu_t(a,c)),
  \end{equation}
  for all homogeneous $a,b,c\in L$.
\end{itemize}
\end{defn}
\begin{rem}\label{rbrem1}
  \begin{itemize}
    \item For $r=0$, conditions \ref{rbeqn1} is equivalent to the fact that $L$ is a Lie superalgebra.
    \item For $r=1$, conditions \ref{rbeqn1} is equivalent to
     \begin{eqnarray*}\label{rrbeqn1}
      0&=&-\mu_1(a,[b,c])-[a,\mu_1(b,c)]\\
      &&+ \mu_1([a,b],c)+(-1)^{ab}\mu_1(b,[a,c])+ [\mu_1(a,b),c]+(-1)^{ab}[b,\mu_1(a,c)]\\
       &=&\delta^2\mu_1(a,b,c); \;\text{for all homogeneous}\; a,b,c\in L.
   \end{eqnarray*}
          Thus for $r=1$,  \ref{rbeqn1} is equivalent to saying that $\mu_1\in C_0^2(L;L)$  is a cocycle. In general, for $r\ge 0$, $\mu_r$ is just a 2-cochain, that is,  in $\mu_r\in C_0^2(L;L).$
  \end{itemize}
\end{rem}
\begin{defn}
  The cochain  $\mu_1 \in C_0^2(L;L)$ is called infinitesimal of the   deformation $\mu_t$. In general, if $\mu_i=0,$ for $1\le i\le n-1$, and $\mu_n$ is a nonzero cochain in  $C_0^2(L;L)$, then $\mu_n$ is called n-infinitesimal of the  deformation $\mu_t$.
\end{defn}
\begin{prop}
  The infinitesimal   $\mu_1\in C_0^2(L;L)$ of the  deformation  $\mu_t$ is a cocycle. In general, n-infinitesimal  $\mu_n$ is a cocycle in $C_0^2(L;L).$
\end{prop}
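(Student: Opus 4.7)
The plan is to derive the cocycle condition directly from the deformation equation~\ref{rbeqn1} specialized at the appropriate value of $r$, using the vanishing of the intermediate $\mu_i$ to collapse the sum. The argument for the general $n$-infinitesimal mirrors the $r = 1$ computation already indicated in Remark~\ref{rbrem1}; the $r = 1$ statement is in fact the special case $n = 1$ of the general claim.

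First I would restate the setup: assume $\mu_i = 0$ for $1 \le i \le n-1$, with $\mu_0 = [-,-]$ the original bracket and $\mu_n$ nonzero. The deformation axiom \ref{rbeqn1} holds for every $r \ge 0$; I would apply it at $r = n$. In the triple sum $\sum_{i+j=n}$, the hypothesis forces every term in which both $i$ and $j$ lie in $\{1,\dots,n-1\}$ to vanish, and also kills the pairs $(i,j) = (k, n-k)$ for $1 \le k \le n-1$. The only surviving contributions come from the pairs $(i,j) = (0,n)$ and $(i,j) = (n,0)$.

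The next step is to collect these surviving terms. On the left-hand side of \ref{rbeqn1} they give
\[
[a,\mu_n(b,c)] + \mu_n(a,[b,c]),
\]
while the right-hand side contributes
\[
[\mu_n(a,b),c] + \mu_n([a,b],c) - (-1)^{ab}[b,\mu_n(a,c)] - (-1)^{ab}\mu_n(b,[a,c]).
\]
Transposing everything to one side and rearranging produces exactly the expression
\[
-\mu_n([a,b],c) + (-1)^{ab}\mu_n(b,[a,c]) - \mu_n(a,[b,c]) + [a,\mu_n(b,c)] - (-1)^{ab}[b,\mu_n(a,c)] - [\mu_n(a,b),c] = 0,
\]
for all homogeneous $a,b,c \in L$. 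Using the antisymmetry of $\mu_n$ (condition (b) of Definition~\ref{rb3}) to rewrite $[\mu_n(a,b),c]$ and, correspondingly, the bracket terms with the Koszul signs $(-1)^{x_ix_j}$, I match this identity against the explicit formula for $\delta^2$ in \ref{SLCOH3} with $g = \mu_n$ (noting $\deg(\mu_n) = 0$, so the sign $(-1)^{fx_i}$ factors in $\delta^1$ disappear in the relevant places). This shows $\delta^2_G \mu_n = 0$. Finally, equivariance of $\mu_n$ and the fact that $\mu_n \in C_0^2(L;L)$ ensure $\mu_n \in (C_G^2)_0(L;L)$, so $\mu_n \in (Z_G^2)_0(L;L)$.

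The only nontrivial bookkeeping is the sign-matching step between the collapsed deformation identity and the $\delta^2$-formula: one has to verify that every Koszul sign $(-1)^{x_ix_j}$ lines up correctly when the antisymmetry of $\mu_n$ is invoked. This is the sole potential obstacle, but it is a routine verification already implicit in the $r=1$ case of Remark~\ref{rbrem1}, and exactly the same sign pattern recurs for general $n$ because the only contributing pairs in $\sum_{i+j=n}$ are still $(0,n)$ and $(n,0)$.
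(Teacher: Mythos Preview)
Your proposal is correct and follows essentially the same approach as the paper: the paper's proof simply refers back to Remark~\ref{rbrem1} for $n=1$ and declares the case $n>1$ ``similar,'' which is precisely the collapse of the sum in~\ref{rbeqn1} at $r=n$ that you carry out in detail. Your write-up is in fact considerably more explicit than the paper's two-line proof.
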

\begin{proof}
  For n=1, proof is obvious from the Remark \ref{rbrem1}. For $n>1$, proof is similar.
\end{proof}
\section{Equivalence of Equivariant  Formal  Deformations and Cohomology }\label{rbsec6}
Let   $\mu_t$  and $\tilde{\mu_t}$ be two formal deformations of a Lie superalgebra $L-L_0\oplus L_1$. A formal isomorphism from the deformation $\mu_t$ to $\tilde{\mu_t}$  is a $K[[t]]$-linear automorphism $\Psi_t:L[[t]]\to L[[t]]$ of the  form  $\Psi_t=\sum_{i=0}^{\infty}\psi_it^i$, where each $\psi_i$ is a homogeneous $K$-linear map $L\to L$ of degree $0$, $\psi_0(a)=a$, for all $a\in T$ and $$\tilde{\mu_t}(\Psi_t(a),\Psi_t(b))=\Psi_t\circ\mu_t(a,b),$$ for all $a,b\in L.$
\begin{defn}
  Two  deformations $\mu_t$  and $\tilde{\mu_t}$ of a Lie superalgebra $L$ are said to be equivalent if there exists a formal isomorphism  $\Psi_t$ from $\mu_t$ to  $\tilde{\mu_t}$.
\end{defn}
 Formal isomorphism on the collection of all  formal deformations of a Lie superalgebra $L$ is an equivalence relation.
\begin{defn}
  Any formal deformation of T that is equivalent to the deformation $\mu_0$ is said to be a trivial deformation.
\end{defn}

\begin{thm}
  The cohomology class of the infinitesimal of a  deformation $\mu_t$ of   a Lie Superalgebra $L$ is determined by the equivalence class of $\mu_t$.
\end{thm}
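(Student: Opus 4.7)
The plan is to expand the defining relation of a formal isomorphism in powers of $t$ and compare the coefficients of $t^1$, which should pin down $\mu_1-\tilde{\mu}_1$ as a coboundary in the equivariant cochain complex. Concretely, assume $\Psi_t=\sum_{i\ge 0}\psi_it^i$ (with $\psi_0=\mathrm{id}_L$ and each $\psi_i$ a $G$-equivariant homogeneous $K$-linear map of degree $0$, which I believe is implicit in calling the equivalence ``equivariant'') is a formal isomorphism from $\mu_t$ to $\tilde{\mu}_t$, so that
\[
\tilde{\mu}_t(\Psi_t(a),\Psi_t(b))=\Psi_t(\mu_t(a,b)), \qquad \forall\, a,b\in L.
\]

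First I would substitute the power series expansions on both sides and read off the coefficient of $t^0$: this just gives $\tilde{\mu}_0(a,b)=\mu_0(a,b)=[a,b]$, consistent with $\psi_0=\mathrm{id}$. Next I would extract the coefficient of $t^1$. On the left, the contributions come from $(i,j,k)$ with $i+j+k=1$ in $\sum\tilde{\mu}_i(\psi_j(a),\psi_k(b))\,t^{i+j+k}$, giving
\[
\tilde{\mu}_1(a,b)+[\psi_1(a),b]+[a,\psi_1(b)].
\]
On the right, the $t^1$-coefficient is $\psi_1([a,b])+\mu_1(a,b)$. Equating yields
\[
\mu_1(a,b)-\tilde{\mu}_1(a,b)=[\psi_1(a),b]+[a,\psi_1(b)]-\psi_1([a,b]).
\]

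Then I would identify the right-hand side with $\delta^1\psi_1$. Using super-antisymmetry of $[-,-]$ and the fact that $\psi_1$ has degree $0$, we have $[\psi_1(a),b]=-(-1)^{ab}[b,\psi_1(a)]$, so by the formula for $\delta^1$ in Equation \ref{SLCOH2},
\[
[\psi_1(a),b]+[a,\psi_1(b)]-\psi_1([a,b])
=-\psi_1([a,b])+[a,\psi_1(b)]-(-1)^{ab}[b,\psi_1(a)]
=\delta^1\psi_1(a,b).
\]
Thus $\mu_1-\tilde{\mu}_1=\delta^1\psi_1$.

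Finally, since $\psi_1$ is $G$-equivariant, homogeneous of degree $0$, and lies in $(C^1_G)_0(L;L)$, the element $\delta^1\psi_1$ belongs to $(B^2_G)_0(L;L)$, so $\mu_1$ and $\tilde{\mu}_1$ represent the same class in $(H^2_G)_0(L;L)$. The only subtle point, which I regard as the main obstacle, is being careful with the super-sign conventions and with the implicit assumption that the formal isomorphism used in ``equivalence'' is itself $G$-equivariant; once the latter is built into the definition, the computation is forced and the argument generalises verbatim to show that the class of the first non-vanishing $n$-infinitesimal is also an invariant of the equivalence class.
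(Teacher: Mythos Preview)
Your argument is correct and follows exactly the paper's approach: expand the formal-isomorphism identity in powers of $t$, equate the $t^1$-coefficients to obtain $\mu_1-\tilde{\mu}_1=[\psi_1(-),-]+[-,\psi_1(-)]-\psi_1([-,-])=\delta^1\psi_1$, and conclude that the infinitesimals are cohomologous. Your write-up is in fact more explicit than the paper's, and your remark that the $\psi_i$ should be taken $G$-equivariant (so that $\delta^1\psi_1\in (B^2_G)_0(L;L)$) is a valid observation about a point the paper leaves implicit in its definition of formal isomorphism.
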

\begin{proof}
  Let  $\Psi_t$ be a formal  isomorphism  from  $\mu_t$ to  $\tilde{\mu_t}$. So  we have, for  all $a,b\in L$,  $\tilde{\mu_t}(\Psi_ta,\Psi_tb)=\Psi_t\circ \mu_t(a,b)$. This implies that \begin{eqnarray*}
     (\mu_1-\tilde{\mu_1})(a,b)&=& [\psi_1a,b]+[a,\psi_1b]-\psi_1([a,b])\\
     &=& \delta^1\psi_1(a,b).
  \end{eqnarray*}
 So we have $\mu_1-\tilde{\mu_1}=\delta^1\psi_1$. This completes the proof.
\end{proof}
\section{Some Examples of Equivariant Deformations}
In this section we discuss some examples of equivariant formal deformations of Lie superalgebras.
\begin{exmpl}
 Let $e_{ij}$ denote a $2\times 2$ matrix with $(i,j)$th entry $1$ and all other entries $0.$ Consider $L_0=span\{e_{11}, e_{22}\}$, $L_1=span\{e_{12}, e_{21}\}$. Then $L=L_0\oplus L_1$ is a Lie superalgebra with the bracket $[,]$ defined by $$[a,b]=ab-(-1)^{\bar{a}\bar{b}}ba.$$ Define a function $\psi:\mathbb{Z}_2\times L\to L$ by    
  $ \psi(0,x)=x, \forall x\in L$,  $\psi(1, e_{11})=e_{22}$, $\psi(1, e_{22})=e_{11}$, $\psi(1, e_{12})=e_{21}$, $\psi(1, e_{21})=e_{12}$.
In Example \ref{ELS100}, we have seen that this gives an action of $\mathbb{Z}_2$ on $L.$ 
Define  a bilinear map $*:L\times L\to L$  by $$e_{ij}*e_{kl}= \begin{cases}
e_{li},~~ \text{if}~ j=k\\
0,~~ \text{otherwise}
\end{cases}.$$
Now define $\mu_1:L\times L\to L$ by $$\mu_1(a,b)=a*b-(-1)^{ab}b*a,$$ for all  homogeneous $a$ , $b$ in $L.$ 
We have 
\begin{enumerate}
\item $1\mu_1(e_{ii}, e_{ii})=0=\mu_1(1e_{ii}, 1e_{ii}),$ $\forall$  $i=1,2$.
\item $1\mu_1(e_{ii}, e_{jj})=0=\mu_1(e_{jj}, e_{ii})=\mu_1(1e_{ii}, 1e_{jj}),$ $\forall$ $i,j=1,2, \; i\ne j$.
\item $1\mu_1(e_{ij}, e_{ji})=1(e_{jj}-(-1)^1e_{ii})=e_{ii}+e_{jj}=\mu_1(1e_{ij}, 1e_{ji}),$  $\forall$ $i,j=1,2, \; i\ne j$.
\item $1\mu_1(e_{ij}, e_{ij})=0=(e_{ji}, e_{ji})=\mu_1(1e_{ij}, 1e_{ij}),$  $\forall$ $i,j=1,2, \; i\ne j$.
\item $1\mu_1 (e_{ii}, e_{ij})=1(e_{ji})=e_{ij}=\mu_1(e_{jj}, e_{ji})=\mu_1(1e_{ii}, 1e_{ij}),$ $\forall$ $i,j=1,2, \; i\ne j$.
\item $1\mu_1(e_{jj}, e_{ij})=1(-e_{ji})=-e_{ij}=\mu_1(e_{ii}, e_{ji}]=\mu_1(1e_{jj}, 1e_{ij}),$ $\forall$ $i,j=1,2, \; i\ne j$.
\end{enumerate} 
Hence $\mu_1$ is $\mathbb{Z}_2$ equivariant.
Define $\mu_t=\mu_0+\mu_1t,$ where $\mu_0(a,b)=[a,b].$ We shall show that $\mu_t$ is an equivariant deformation of $L$ of order $1.$  
To conclude this only thing that we need to show is that 
\begin{eqnarray*}\label{rrbeqn101}
     \delta^2\mu_1(a,b,c)&=&-\mu_1(a,[b,c])-[a,\mu_1(b,c)]\\
      &&+ \mu_1([a,b],c)+(-1)^{ab}\mu_1(b,[a,c])+ [\mu_1(a,b),c]+(-1)^{ab}[b,\mu_1(a,c)]\\
       &=&0;~~~~ \;\text{for all homogeneous}\; a,b,c\in L.
   \end{eqnarray*}
We have 
\begin{eqnarray}\label{rrbeqn102}
 \delta^2\mu_1(b,c,a)&=&-\mu_1(b,[c,a])-[b,\mu_1(c,a)]\notag\\
      &&+ \mu_1([b,c],a)+(-1)^{bc}\mu_1(c,[b,a])+ [\mu_1(b,c),a]+(-1)^{bc}[c,\mu_1(b,a)]\notag\\
      &=& (-1)^{ac}\mu_1(b,[a,c])+(-1)^{ac}[b,\mu_1(a,c)]\notag\\
      &&-(-1)^{ab+ac} \mu_1(a,[b,c])+(-1)^{ab+ac}\mu_1([a,b],c)\notag\\ &&-(-1)^{ab+ac} [a,\mu_1(b,c)]+(-1)^{ab+ac}[\mu_1(a,b),c]\notag\\
      &=& (-1)^{ab+ac}\{ -\mu_1(a,[b,c])-[a,\mu_1(b,c)]\notag\\
      &&+ \mu_1([a,b],c)+(-1)^{ab}\mu_1(b,[a,c])+ [\mu_1(a,b),c]+(-1)^{ab}[b,\mu_1(a,c)]\}\notag\\
      &=&(-1)^{ab+ac}\delta^2\mu_1(a,b,c)
\end{eqnarray}

\begin{eqnarray}\label{els81}
\delta^2\mu_1(e_{11},e_{12},e_{21})&=& -\mu_1(e_{11},e_{11}+e_{22})-[e_{11},e_{22}+e_{11}]\notag\\
      &&+ \mu_1(e_{12},e_{21})+\mu_1(e_{12},-e_{21})+ [e_{21},e_{21}]+[e_{12},-e_{12}]\notag\\
      &=&0
\end{eqnarray}
\begin{eqnarray}\label{els82}
\delta^2\mu_1(e_{11},e_{21},e_{12})&=& -\mu_1(e_{11},e_{11}+e_{22})-[e_{11},e_{22}+e_{11}]\notag\\
      &&+ \mu_1(-e_{21},e_{12})+\mu_1(e_{21},e_{12})+ [-e_{12},e_{12}]+[e_{21},e_{21}]\notag\\
      &=&0
\end{eqnarray}
\begin{eqnarray}\label{els83}
\delta^2\mu_1(e_{11},e_{12},e_{22})&=& -\mu_1(e_{11},e_{12})-[e_{11},e_{21}]\notag\\
      &&+ \mu_1(e_{12},e_{22})+\mu_1(e_{12},0)+ [e_{21},e_{22}]+[e_{12},0]\notag\\
      &=&0
\end{eqnarray}
\begin{eqnarray}\label{els84}
\delta^2\mu_1(e_{11},e_{22},e_{12})&=& -\mu_1(e_{11},-e_{12})-[e_{11},-e_{21}]\notag\\
      &&+ \mu_1(0,e_{12})+\mu_1(e_{22},-e_{12})+ [0,e_{12}]+[e_{22},e_{21}]\notag\\
      &=&0
\end{eqnarray}
\begin{eqnarray}\label{els85}
\delta^2\mu_1(e_{11},e_{22},e_{21})&=& -\mu_1(e_{11},e_{21})-[e_{11},e_{12}]\notag\\
      &&+ \mu_1(0,e_{21})+\mu_1(e_{22},-e_{21})+ [0,e_{21}]+[e_{22},-e_{12}]\notag\\
      &=&0
\end{eqnarray}
\begin{eqnarray}\label{els86}
\delta^2\mu_1(e_{11},e_{21},e_{22})&=& -\mu_1(e_{11},-e_{21})-[e_{11},-e_{12}]\notag\\
      &&+ \mu_1(-e_{21},e_{22})+\mu_1(e_{21},0)+ [-e_{12},e_{22}]+[e_{21},0]\notag\\
      &=&0
\end{eqnarray}
\begin{eqnarray}\label{els87}
\delta^2\mu_1(e_{22},e_{12},e_{21})&=& -\mu_1(e_{22},e_{11}+e_{22})-[e_{22},e_{22}+e_{11}]\notag\\
      &&+ \mu_1(-e_{12},e_{21})+\mu_1(e_{12},e_{21})+ -[e_{21},e_{21}]+[e_{12},e_{12}]\notag\\
      &=&0
\end{eqnarray}
\begin{eqnarray}\label{els88}
\delta^2\mu_1(e_{22},e_{21},e_{12})&=& -\mu_1(e_{22},e_{11}+e_{22})-[e_{22},e_{22}+e_{11}]\notag\\
      &&+ \mu_1(e_{21},e_{12})+\mu_1(e_{21},-e_{12})+ [e_{12},e_{12}]+[e_{21},-e_{21}]\notag\\
      &=&0
\end{eqnarray}
Using Equations \ref{rrbeqn102}, \ref{els81}, \ref{els82}, \ref{els83}, \ref{els84}, \ref{els85}, \ref{els86}, \ref{els87} and  \ref{els88} we conclude that Equation \ref{rrbeqn101} holds. Hence $\mu_t$ is an equivariant deformation  of $L$ of order  $1$.
\end{exmpl}

\bibliographystyle{alpha}

\bibliography{Deform-of-Equiv-Lie-Super-Algeb}

\end{document}